\providecommand{\U}[1]{\protect\rule{.1in}{.1in}}
\newtheorem{theorem}{Theorem}[section]
\newtheorem{lemma}[theorem]{Lemma}
\newtheorem{proposition}[theorem]{Proposition}
\newtheorem{definition}[theorem]{Definition}
\theoremstyle{definition}
\newtheorem{example}[theorem]{Example}
\theoremstyle{remark}
\newtheorem{remark}[theorem]{Remark}
\numberwithin{equation}{section}
\pgfplotsset{compat=1.17}
\providecommand{\gph}{\mathop{\rm gph}\nolimits}
\providecommand{\dom}{\mathop{\rm dom}\nolimits}
\providecommand{\into}{\mathop{\rm int}\nolimits}
\providecommand{\bd}{\mathop{\rm bd}\nolimits}   
\providecommand{\argmin}{\mathop{\rm argmin}\nolimits}
\providecommand{\argmax}{\mathop{\rm argmax}\nolimits}
\providecommand{\tto}{\mathop{\rightrightarrows}\nolimits} 
\newcommand{\R}{\mathbb{R}}
\newcommand{\N}{\mathbb{N}}
\newcommand{\Ball}{ \mathbb{B}}
\newcommand{\Sph}{\mathbb{S}}
\providecommand{\Liminf}{\mathop{\rm Liminf}\limits}
\newcommand{\ind}{\mathds{1}}
\newcommand{\proj}{\mathrm{proj}}
\newcommand{\Proj}{\mathrm{Proj}}
\newcommand{\slope}[1]{|\nabla #1|}
\newcommand{\limSlope}[1]{\overline{|\nabla #1|}}
\definecolor{forestgreenweb}{rgb}{0.13, 0.55, 0.13}
\begin{document}
	
\begin{center}
{\LARGE Steepest geometric descent for regularized quasiconvex functions}

\vspace{1cm}
		
{\Large \textsc{Aris Daniilidis, David Salas}}
\end{center}
	
\bigskip

\noindent\textbf{Abstract.} We establish existence of steepest descent curves emanating from almost every point of a regular locally Lipschitz quasiconvex functions, where regularity means 
that the sweeping process flow induced by the sublevel sets is reversible. We then use max-convolution to regularize general quasiconvex functions and obtain a result of the same nature in a more general setting.\bigskip
	
\noindent\textbf{Key words.} Steepest descent curves, quasiconvex functions, max-convolution, sweeping process.
	
\vspace{0.6cm}
	
\noindent\textbf{AMS Subject Classification} \ \textit{Primary} 26B25, 37C10  
\textit{Secondary} 34A60, 49J52, 49J53

	

\section{Introduction}\label{sec:intro}

Steepest descent curves are at the core of the theory of variational analysis, differential equations and optimization. 
Given a $\mathcal{C}^1$-smooth function $f:\R^d\to \R$ we call steepest descent curve the solution of the gradient flow equation
\begin{equation}\label{eq:Steepest-smooth}
    \begin{cases}
	\dot{x}(t) = -\nabla f(x(t)),\quad\text{a.e. }t\in[0,T],\\
	x(0) = x_{0}.
    \end{cases}
\end{equation}
It is well-known that the above differential equation has a solution (as a direct application of the Picard-Lindel\"{o}f theorem). When the assumption of smoothness is missing, the existence of steepest descent curves can still be established for convex functions. Indeed, if $f:\R^d\to \R$ is convex, its steepest descent curves are solutions of the subdifferential inclusion
	\begin{equation}\label{eq:Steepest-Convex}
		\begin{cases}
			\dot{x}(t) \in -\partial f(x(t)),\quad\text{a.e. }t\in[0,T],\\
			x(0) = x_{0}.
		\end{cases}
	\end{equation}
It is well-known that the above differential inclusion admits a (unique) solution (see, e.g., \cite{AttouchButtazzo2014}). Similarly, existence of solutions for several gradient descent and proximal methods are often based on convexity (see, e.g., \cite{PeypouquetSorin2010}).
	\medskip 
	
In the setting of metric analysis, a steepest descent curve is a $1$-Lipschitz curve verifying the metric equation
	\begin{equation}\label{eq:Steepest-Slope}
		\begin{cases}
			(f\circ x)'(t) = -\slope{f}(x(t)),\quad \text{a.e. }t\in [0,T],\\
			x(0) = x_{0},
		\end{cases}
	\end{equation}
where $\slope{f}$ denotes the (metric) slope of $f$  introduced in \cite{DeGiorgiMarinoTosques1980}. This last formulation coincides with~\eqref{eq:Steepest-smooth} in the smooth case and with~\eqref{eq:Steepest-Convex} in the convex case after performing the usual arc-length reparametrization. The metric gradient flow given by~\eqref{eq:Steepest-Slope} has been studied in detail (we refer to~\cite{AmbrosioGigliSavare2008} for a comprehensive exposition). 
Remarkable families of functions also admit steepest descent curves in the above cases:  geodesically convex functions in metric spaces \cite{AmbrosioGigliSavare2008} and smooth functions on Riemannian manifolds (see, e.g.,~\cite{Villani2009}).
 However, existence of steepest descent curves is in general hard to verify, even for Lipschitz functions in $\R^d$ (see comments of~\cite[Section~9.3.5]{Ioffe2017}).  Due to this obstruction, the authors in \cite{DDL2015, LMV2015} consider the more general notion of trajectories of a convex foliation (terminology introduced in \cite{DLS2010}) and establish existence of such orbits (see, e.g., \cite[Theorem~2.6]{DDL2015}). In case the foliation is given by the sublevel set of a quasiconvex function, the above orbits are call orbits of geometric descent. Their connection with steepest descent orbits has been explored in \cite{DrusvyatskiyIoffeLewis2015,Ioffe2017}: these curves fail to be steepest descent curves in general, but instead correspond to what the authors in \cite{DrusvyatskiyIoffeLewis2015} called \emph{curves of near-steepest descent}. One of the main difficulties is in the fact that the slope mapping $x\mapsto\slope{f}(x)$ fails to be lower-semicontinuous, inducing a gap with respect to its closure $x\mapsto\limSlope{f}(x)$. Curves of near-steepest descent lie, in some sense, within this gap. \medskip

In this work, we are interested in steepest descent curves for the class of locally Lipschitz quasiconvex functions. This  is another very important family in the context of optimization with amenable properties (see, e.g., \cite{Aussel2014}). Even though the desired existence result seems to fail for this class as well (it is not yet clear if this is the case or not), we have been able to provide a positive existence result for the class of \textit{regular} quasiconvex functions, where regularity ensures that the sweeping process flow induced by the sublevel sets is reversible. Then, for the general case of locally Lipschitz quasiconvex functions, we consider a regularization scheme using the max-convolution operator (see, e.g., \cite{SeegerVolle1995} and the references therein).  Indeed, we define for any  locally Lipschitz quasiconvex function $f:\R^d\to \R$ and $\varepsilon>0$, a regularization function $f_{\varepsilon}$ satisfying the following properties:
	\begin{enumerate}
		\item[(i).] $f_{\varepsilon}$ admits steepest descent curves for almost every initial data on its domain.
		\item[(ii).] Every critical point of $f_{\varepsilon}$ is at distance at most $\varepsilon$ of a critical point of $f$.
	\end{enumerate}

Our work borrows heavily from the geometric approach of \cite{DDL2015, DrusvyatskiyIoffeLewis2015}. We look at curves of geometric descent. Then, under regularity assumptions we are able to reverting the (unilateral) sweeping process and deduce that almost every curve of geometric descent is in fact a steepest descent curve.
	\medskip
	
	The rest of the paper is organized as follows: In Section~\ref{sec:pre} we fix our terminology and quote some preliminary results. In Section~\ref{sec:Reversible}, under adequate assumptions of the quasiconvex function (ensuring the reversibility of its sweeping process flow), we directly relate steepest descent curves as solutions of the sweeping process induced by the sublevel sets. In Section~\ref{sec:main}, we extend the results of the preceding section to general quasiconvex function, by means of regularization and localization. 
	
\section{Preliminaries}\label{sec:pre}

Throughout this work, we consider the Euclidean space $\R^d$ endowed with its usual inner product $\langle \cdot,\cdot\rangle$ and its induced norm $\|\cdot\|$. We denote by $B(x,r)$  (respectively, $\bar B(x,r)$) the open (respectively, closed) ball centered at $x$ of radius $r>0$ and by $\Ball_d$ (respectively, $\Sph_d$) the unit closed ball (respectively, the unit sphere). For a set $A\subset \R^d$, we denote by $\into(A)$, $\overline{A}$, $\bd(A)$ and $A^{\circ}$ its interior, closure, boundary and (negative) polar set, respectively.
	\medskip
	
	For a function $f:\R^d\to \R\cup\{+\infty\}$ and $\alpha\in\R$, we denote by $[f\leq \alpha]$, the $\alpha$-sublevel set of $f$, that is, 
	\begin{equation}\label{eq:SublevelSet}
		[f\leq \alpha] = \{x \in\R^d\mid f(x)\leq \alpha \}.
	\end{equation}
	Similarly, we define the strict $\alpha$-sublevel set $[f<\alpha]$, and the corresponding sets $[f=\alpha]$, $[f>\alpha]$ and $[f\geq \alpha]$. We denote its (effective) domain by $\dom f$, that is, $\dom f = \{x \in \R^d\mid f(x) <+\infty\}$. 
A function $f:\R^d\to \R\cup\{+\infty\}$ is called coercive if for every $\alpha<\sup f$ the sublevel set $[f \leq \alpha]$ is compact.
	\medskip
	
	A function $f:\R^d\to \R\cup\{+\infty\}$ is said to be \emph{quasiconvex} if
	\begin{equation}\label{eq:quasiconvex}
		\forall x,y\in\R^d,\,\forall t\in[0,1],\quad f(tx + (1-t)y)\leq \max\{ f(x),f(y)\}.
	\end{equation}
	It is well-known that $f$ is quasiconvex if and only if every sublevel set $[f\leq \alpha]$ is convex. Recall that a function is lower semicontinuous (lsc, for short) if the sublevel sets are closed.

	For a function $f:\R^d\to \R\cup\{+\infty\}$ we define the metric slope $\slope{f}$ as
	\begin{equation}\label{eq:slope-def}
		\slope{f}(x) := \left\{\begin{array}{cl}
			\displaystyle\limsup_{y\to x} \frac{(f(x)-f(y))^+}{\|x-y\|},  & \text{ if }x\in \dom f \\
			+\infty,  & \text{otherwise, } 
		\end{array}\right.
	\end{equation}
	where $a^+ = \max\{a,0\}$. The metric slope enjoys several interesting properties (see, e.g., \cite{AzeCorvellec2004,AmbrosioGigliSavare2008}), but it is well-known that it might fail to be lower semicontinuous (see, e.g., \cite{DrusvyatskiyIoffeLewis2015}). Thus, we consider the limiting slope $\limSlope{f}$ as the lower semicontinuous closure of $\slope{f}$, that is,
	\begin{equation}\label{eq:limSlope-def}
		\limSlope{f}(x) = \liminf_{y\to_f x} \slope{f}(y),
	\end{equation}
	where $y\to_f x$ means $(y,f(y))\to (x,f(x))$. A point $x\in\R^d$ is called \textit{critical} for $f$ if $\limSlope{f}(x) = 0$.
	\medskip
	
	In what follows, we denote by either $d_S(x)$ or $d(x,S)$ the distance of $x\in\R^d$ to the set $S\subset \R^d$ and by $\Proj_S(x)$ or $\Proj(x;S)$ the set of nearest points from $x$ in $S$. Whenever this set is a singleton, we call this unique nearest point as \textit{metric projection} and we denote it by $\proj_S(x)$. 
	\medskip
	
	A set $S$ is said to be \emph{prox-regular} if there exists a continuous function $\rho:S\to (0,+\infty]$ such that the enlargement of $S$ given by
	\begin{equation}\label{eq:ProxReg-Enlargement}
	  U_{\rho(\cdot)}(S) = \{u\in \R^d\ : \ \exists y\in \Proj_S(u)\text{ with }d_S(u) <\rho(y)\}  
	\end{equation}
is open and the projection $\proj_S$ is well-defined on $U_{\rho(\cdot)}(S)$  (equivalently, $d_S^2$ is $\mathcal{C}^1$ on $U_{\rho(\cdot)}(S)$, see, e.g, \cite[Prop.~4 and Prop.~11]{ColomboThibault2010prox}). For $r>0$, we say that $S$ is \emph{$r$-prox-regular} if the function $\rho(\cdot)$ can be taken as $\rho\equiv r$. Every convex set is $(+\infty)$-prox-regular.
	\medskip
	
	It is well-known (see, e.g., \cite{Thibault2023}) that for a prox-regular set, Bouligand and Clarke tangent cones coincide at every point (this is known as tangential regularity) and the same applies to the classical notions of normal cones (proximal, Fr\'{e}chet, limiting, Clarke). Since we are going to work only with convex and prox-regular sets, the notions of tangent and normal cones are unambiguously defined, that is, if $S\subset \R^d$ is prox-regular and $x\in S$, we define the \emph{(Clarke) tangent cone} and the \emph{(Clarke) normal cone} of~$S$ at~$x$ by the formulae
	\begin{equation}
		T(S;x) := \Liminf_{S\ni y\to x; t\downarrow 0} \frac{1}{t}(S-y)\qquad\text{ and }\qquad N(S;x) := [T(S;x)]^{\circ},
	\end{equation}
	where $\Liminf$ is the inferior limit of sets in the sense of Painlevé-Kuratowski (see, e.g.,\, \cite{Thibault2023}).
	\medskip
	
	A set-valued map $M:A\tto B$ is a mapping that assigns to each $a\in A$ a subset $M(a)$ of~$B$. We denote the domain of $M$ and the graph of $M$ as the sets $\dom M = \{ a\in A\ :\ M(a)\neq\emptyset\}$ and $\gph M = \{ (a,b)\ :\ b\in M(a)\}$. In the particular case when $A= [0,T]$ and $B=\R^d$, we say that the set-valued map is a \emph{moving set map} (also called \textit{sweeping process map}, see \cite{DD2017}).
	\medskip
	
	For two sets $A,B\subset \R^d$, the Hausdorff distance between $A$ and $B$ is given by
	\begin{equation}\label{eq:HausdorffDistance-def}
		d_H(A,B) = \max\left\{\sup_{a\in A} d_B(a),\, \,\sup_{b\in B} d_A(b)\right\}\, \in\, [0,+\infty].
	\end{equation}
	A set-valued map $M:A\subset\R^p\tto \R^q$ is said to be Lipschitz-continuous if there exists $L>0$ such that
	\begin{equation}
		\forall x,y\in A,\quad d_H(M(x),M(y))\leq L\|x-y\|.
	\end{equation}
	\medskip
	
	Let $K:[0,T]\tto \R^d$ be a moving set with prox-regular values. We define the sweeping process differential inclusion of $K$ as
	\begin{equation}\label{eq:SweepingProcess}
		\begin{cases}
			\phantom{a}\dot{u}(t) \in - N(K(t);u(t)),\qquad\text{ a.e. }t\in [0,T]\\
			\phantom{a} u(0) = x_0\in K(0),
		\end{cases}
	\end{equation}

	
It is well known that if $K$ is Lipschitz continuous and uniformly $r$-prox-regular for some $r>0$ (that is, $K(t)$ is $r$-prox-regular for every $t\in [0,T]$), the sweeping process admits a unique solution for every initial condition $x_0\in K(0)$ (see, e.g., \cite{Thibault2003}). The following proposition surveys the main properties of such a solution in the case of the sublevel moving set $K:[0,T]\tto\R^d$ given by $K(t) = [f\leq f(x_0)-t]$. The first statement is a classical fact in the sweeping process theory (see, e.g., \cite{Thibault2003}). The second and third statements follow directly from the fact that the process $K:[0,T]\tto\R^d$ is parametrized with respect to the values of the function $f$, while the fourth statement follows from  \cite[Theorem~3.4 and Claim 3.6]{DrusvyatskiyIoffeLewis2015}. 
	\medskip
	
	
\begin{proposition} \label{prop:UniqueSolutionSweeping} 
Let $f:\R^d\to\R$ be a locally Lipschitz function and let $\alpha\in \R$ and $T>0$ be such that $\alpha-T>\inf f$. Let $K:[0,T]\tto \R^d$ be the sublevel moving set starting from $\alpha$, that is,
		\[
		K(t) = [f\leq \alpha - t],\qquad \forall t\in [0,T].
		\]
		If $K$ is Lipschitz-continuous and uniformly $r$-prox-regular, then for every $x_0\in K(0)\setminus\into(K(T))$, the sweeping process \eqref{eq:SweepingProcess} has a unique solution $u:[0,T]\to \R^d$, satisfying that
		\begin{enumerate}
			\item[(i)] $u(\cdot)$ is Lipschitz-continuous on $[0,T]$.
			\item[(ii)] For each $t\in [0, \alpha - f(x_0)]$, $u(t) = x_0$ and
			\[
			u(t) \in\bd K(t),\qquad \forall t\in [\alpha - f(x_0),T]. 
			\]
			\item[(iii)] $(f\circ u)'(t) = -1$ for a.e. $t\in [\alpha-f(x_0),T]$, and
			\item[(iv)] $u(\cdot)$ is a curve of near-maximal slope of $f$, in the sense that
			\[
			\frac{1}{|\nabla f|(u(t))} \,\leq\,\, \|\dot{u}(t)\|\, \,\leq \,\,\frac{1}{\overline{|\nabla f|}(u(t))},\quad\text{ for\, a.e. }\,\, t\in [\alpha-f(x_0),\,T].
			\]
			
		\end{enumerate}	
	\end{proposition}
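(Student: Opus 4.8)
The plan is to read off existence, uniqueness and the Lipschitz regularity (i) from the classical sweeping-process theory, and then to exploit the fact that $t\mapsto K(t)$ is a \emph{nested decreasing} family parametrized exactly by the level $\alpha-t$ of $f$ in order to pin the trajectory to the moving boundary. First I would invoke well-posedness: since $K$ is Lipschitz-continuous for $d_H$ and uniformly $r$-prox-regular, \eqref{eq:SweepingProcess} admits a unique solution $u$ issued from $x_0\in K(0)$, and $u$ is Lipschitz on $[0,T]$ with modulus controlled by the Lipschitz constant of $K$ (see \cite{Thibault2003}); this yields existence, uniqueness and (i) at once. Next, set $s_0:=\alpha-f(x_0)$. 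From $x_0\in K(0)$ we get $f(x_0)\le\alpha$, and from $x_0\notin\into(K(T))$ together with $[f<\alpha-T]\subseteq\into(K(T))$ we get $f(x_0)\ge\alpha-T$, so $s_0\in[0,T]$. For $t\in[0,s_0)$ we have $f(x_0)<\alpha-t$, hence $x_0\in[f<\alpha-t]\subseteq\into(K(t))$ and $N(K(t);x_0)=\{0\}$; thus the constant curve $t\mapsto x_0$ solves \eqref{eq:SweepingProcess} on $[0,s_0]$, and uniqueness forces $u\equiv x_0$ there, which is the first half of (ii).

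The heart of the argument is to show $f(u(t))=\alpha-t$ for every $t\in[s_0,T]$. Put $h(t):=\alpha-t-f(u(t))$; this is Lipschitz (as $f$ is locally Lipschitz and $u$ is Lipschitz with values in a compact sublevel set), it satisfies $h\ge 0$ because $u(t)\in K(t)$, and $h(s_0)=0$. On the open set $A:=\{t\in[s_0,T]:h(t)>0\}$ one has $u(t)\in[f<\alpha-t]\subseteq\into(K(t))$, hence $N(K(t);u(t))=\{0\}$ and $\dot u(t)=0$ for a.e.\ $t\in A$. Suppose $A\neq\emptyset$ and pick $t^\ast\in A$; let $\tau:=\sup\{t\in[s_0,t^\ast]:h(t)=0\}$. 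By continuity $h(\tau)=0$ and $\tau<t^\ast$, while $(\tau,t^\ast]\subseteq A$. Since $u$ is Lipschitz with $\dot u=0$ a.e.\ on $(\tau,t^\ast]$, it is constant there, so $f(u(t))=f(u(\tau))=\alpha-\tau$ for all $t\in(\tau,t^\ast]$; but $t\in A$ forces $f(u(t))<\alpha-t<\alpha-\tau$, a contradiction. Hence $A=\emptyset$, i.e.\ $f(u(t))=\alpha-t$ on $[s_0,T]$. Differentiating the Lipschitz map $t\mapsto f(u(t))=\alpha-t$ yields (iii). Moreover $u(t)\in[f=\alpha-t]$, and under the standing hypotheses $\bd K(t)=[f=\alpha-t]$ (the Lipschitz continuity and uniform prox-regularity of $K$ preclude plateaus of $f$ at the relevant levels), so $u(t)\in\bd K(t)$, completing (ii).

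Finally, (iv) is obtained by feeding (iii) into \cite[Theorem~3.4 and Claim~3.6]{DrusvyatskiyIoffeLewis2015}: at a.e.\ $t\in[s_0,T]$ the velocity $\dot u(t)$ is the least-norm element of $-N(K(t);u(t))$, whose norm is sandwiched between the reciprocals of $\slope{f}(u(t))$ and $\limSlope{f}(u(t))$. The lower bound is the quantitative form of (iii): the slope inequality $|(f\circ u)'(t)|\le\slope{f}(u(t))\,\|\dot u(t)\|$ together with $(f\circ u)'(t)=-1$ gives $\|\dot u(t)\|\ge 1/\slope{f}(u(t))$, while the upper bound is exactly the normal-cone/limiting-slope estimate of the cited results.

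The step I expect to be the main obstacle is the boundary-tracking claim. The constancy-contradiction cleanly delivers $f(u(t))=\alpha-t$ (hence (iii)) using only continuity of $h$ and the triviality of the normal cone at interior points, but upgrading this to the genuine membership $u(t)\in\bd K(t)$ in (ii) rests on the identification $\bd K(t)=[f=\alpha-t]$. This is precisely the point where the Lipschitz continuity and uniform prox-regularity of the sublevel moving set — rather than properties of $f$ alone — must be used with care.
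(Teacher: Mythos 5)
Your architecture coincides with the paper's, which in fact offers no written proof at all: it points to \cite{Thibault2003} for well-posedness and assertion (i), states that (ii)--(iii) ``follow directly from the fact that the process is parametrized with respect to the values of the function'', and quotes \cite[Theorem~3.4 and Claim 3.6]{DrusvyatskiyIoffeLewis2015} for (iv). Your contribution is to actually implement the middle step, and your implementation of (iii) is correct and complete: $h(t)=\alpha-t-f(u(t))$ is nonnegative, Lipschitz (note: not because sublevel sets are compact --- coercivity is not assumed --- but because $u([0,T])$ is compact and $f$ is Lipschitz on a neighborhood of it), vanishes at $s_0=\alpha-f(x_0)$, and your constancy/contradiction argument on $\{h>0\}$, where the normal cone is trivial and hence $\dot u=0$ a.e., is clean. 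The lower bound in (iv) derived from $(f\circ u)'=-1$ and the definition of the slope is also correct, with the upper bound delegated to the same citation the paper uses. (One garbled phrase: the ``least-norm element of $-N(K(t);u(t))$'' is vacuous, since the least-norm element of a cone is $0$; but you never use this, so it is harmless.)

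The one genuine gap is exactly the step you flag: you assert $\bd K(t)=[f=\alpha-t]$ with only a parenthetical claim that the hypotheses ``preclude plateaus'', and that claim is itself nontrivial --- it amounts to excluding local maximizers of $f$ at the relevant levels (note $x\in\into([f\leq f(x)])$ if and only if $x$ is a local maximizer), and positive slope alone does not exclude them: $f(x)=-\|x\|$ has slope $1$ at its global maximizer, and there the uniform prox-regularity of the sublevel family degenerates. Fortunately the identification is unnecessary for $t\in(s_0,T]$: since $f(u(s))=\alpha-s>\alpha-t$ for $s\in[s_0,t)$, the points $u(s)$ lie outside $K(t)$ and converge to $u(t)\in K(t)$, whence $u(t)\in\bd K(t)$ with no appeal to level-set structure. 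Only the endpoint $t=s_0$ needs extra work (for $s_0=T$ it is precisely the hypothesis $x_0\notin\into(K(T))$). For $s_0<T$, suppose $B(x_0,\rho)\subset K(s_0)$ with, w.l.o.g., $\rho<r$; for $s>s_0$ close to $s_0$ one has $0<d(x_0,K(s))\leq L(s-s_0)<r$, so with $p=\proj_{K(s)}(x_0)$ and $v=(x_0-p)/\|x_0-p\|$, the $r$-prox-regularity of $K(s)$ gives an open ball $B(p+rv,r)$ disjoint from $K(s)$; the point $z=p+\left(d(x_0,K(s))+\rho/2\right)v$ then lies in $B(x_0,\rho)\subset K(s_0)$ while $d(z,K(s))\geq \rho/2$, contradicting $d_H(K(s_0),K(s))\leq L(s-s_0)\to 0$ (the sets are nested, so the Hausdorff distance dominates the excess of $K(s_0)$ over $K(s)$). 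With this repair your proof is complete --- and considerably more detailed than what the paper records.
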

	\medskip
Last but not least, following \cite{EvansGariepy2015Measure}, we denote by $DF(x)$ the derivative of a Lipschitz function $F:\R^d\to\R^d$ at each point of differentiability $x\in\R^d$ and by
	\begin{equation}
		JF(x) = |\det (DF(x))|,
	\end{equation}
the \textit{Jacobian} determinant of~$F$. We finally denote by $\mathcal{H}^m$ the $m$-dimensional Hausdorff measure.

\section{Reversible geometric descent for regular quasiconvex functions}\label{sec:Reversible}

This section is devoted to the study of geometrical curves of descent for a lower semicontinuous quasiconvex function $f:\R^d\to \R\cup\{+\infty\}$ which is \textit{continuous on the interior of its domain}. 
We further consider functions that satisfy the following regularity hypotheses:	
	\begin{enumerate}[label=(H\arabic{*}), ref=(H\arabic{*})]\setlength{\itemsep}{0.3cm}
\item \label{hyp:NonemptyIntArgmin} $f$ is coercive and $\mathrm{int } [f\leq \alpha] \neq \emptyset$, for every $\alpha > \inf f$.

\item\label{hyp:BoundedSlope} The slope of $f$ is bounded away from zero around every $x\in \dom f\setminus\argmin f$, that is, there exist $\delta,\ell>0$ such that
		\[
		\slope{f}(y) > \ell,\qquad \forall y\in B(x,\delta)\cap \dom f.
		\]
		\item\label{hyp:BoundedInternalCurvature} For every $\alpha\in (\inf f,\sup f)$, there exist $\eta,r>0$ such that for every $\beta\in (\alpha-\eta,\alpha+\eta)$, the set $\R^d\setminus \into([f\leq\beta])$ is $r$-prox-regular.
	\end{enumerate}
\smallskip

\begin{remark}
While~\ref{hyp:NonemptyIntArgmin} is a common assumption, \ref{hyp:BoundedSlope}--\ref{hyp:BoundedInternalCurvature} are rather strong regularity conditions. Hypothesis~\ref{hyp:BoundedSlope} is equivalent to say that the only critical points of $f$ are its minimizers. (Such functions are often called \textit{pseudoconvex}, see e.g. \cite{DH99}). Hypothesis \ref{hyp:BoundedInternalCurvature} entails the smoothness of the level sets (see Lemma \ref{lem-mnfd} below). We shall in Section~\ref{sec:main} that every locally Lipschitz quasiconvex function can be regularized in a way that~\ref{hyp:NonemptyIntArgmin}-\ref{hyp:BoundedInternalCurvature} are fulfilled.
\end{remark}

\medskip

Notice that hypothesis~\ref{hyp:BoundedSlope} together with continuity of $f$ on the interior of its domain yield that for every $x\in\into(\dom f)$ and $\delta>0$ such that $B(x,\delta)\subset \dom f$, setting $f(x)=r$ it holds:
	\begin{equation}
		\bd[f\leq r] \cap B(x,\delta) \,\,\,=\,\,\, [f=r] \cap B(x,\delta) .
	\end{equation}
Indeed, continuity of $f$ entails directly the left-to-right inclusion. If the reverse inclusion does not hold, then there would exist $z\in \R^d$ and $\varepsilon>0$ small enough such that $$B(z,\varepsilon) \subset  [f=f(x)] \cap B(x,\delta).$$ This would yield $\slope{f}(z) = 0$ contradicting~\ref{hyp:BoundedSlope}.

\medskip

Let us also mention that it is easy to construct a quasiconvex function $f:\R^d\to \R\cup\{+\infty\}$ whose sublevel sets have smooth boundaries, yet failing the reversibility hypothesis~\ref{hyp:BoundedInternalCurvature}.

\begin{example} Let $S:[0,2]\tto \R^2$ be a convex valued function defined by
\[
	S(s) := \begin{cases}
 \phantom{trilemihn}B(0,s),  &\mbox{ if } s\in [0,1).\\
\phantom{ok}\mathrm{co}\Big( B(0,s) \bigcup B\big((0,2s-1), s-1\big)\Big), & \mbox{ if } s\in [1,2],\smallskip\\	
	\end{cases}
\]
	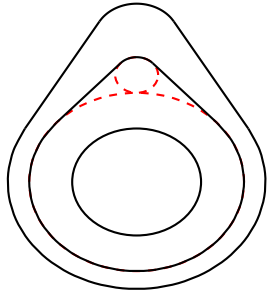
\begin{figure}[h]
		\centering
		\begin{tikzpicture}
			\begin{axis}[
				xmin=-3,xmax=3,
				ymin=-3,ymax=3,
				axis lines=none,
				]
				\draw[red,thick,dashed] (axis cs:{0},{0}) circle (5/4);
				\draw[red,thick, dashed] (axis cs:{0},{3/2}) circle (1/4);
				
				\draw[thick] (axis cs:{5/4*cos(41.8087)},{5/4*sin(41.8087)})--(axis cs:{1/4*cos(41.8087)},{3/2+1/4*sin(41.8087)});
				\draw[thick] (axis cs:{5/4*cos(180-41.8087)},{5/4*sin(180-41.8087)})--(axis cs:{1/4*cos(180-41.8087)},{3/2+1/4*sin(180-41.8087)});
				\draw[
				samples = 20,
				smooth,
				domain = 180-41.8087:360+41.8087,
				variable = \t,
				thick
				]
				plot (
				{5/4*cos(\t)},
				{5/4*sin(\t)}
				);
				\draw[
				samples = 20,
				smooth,
				domain = 41.8087:180-41.8087,
				variable = \t,
				thick
				]
				plot (
				{1/4*cos(\t)},
				{3/2 + 1/4*sin(\t)}
				);
				
				\draw[thick] (axis cs:{3/2*cos(30)},{3/2*sin(30)})--(axis cs:{1/2*cos(30)},{2+1/2*sin(30)});
				\draw[thick] (axis cs:{3/2*cos(180-30)},{3/2*sin(180-30)})--(axis cs:{1/2*cos(180-30)},{2+1/2*sin(180-30)});
				\draw[
				samples = 20,
				smooth,
				domain = 180-30:360+30,
				variable = \t,
				thick
				]
				plot (
				{3/2*cos(\t)},
				{3/2*sin(\t)}
				);
				\draw[
				samples = 20,
				smooth,
				domain = 30:180-30,
				variable = \t,
				thick
				]
				plot (
				{1/2*cos(\t)},
				{2 + 1/2*sin(\t)}
				);
				\draw[ thick] (axis cs:{0},{0}) circle (3/4);
			\end{axis} 
		\end{tikzpicture}
		\caption{{\footnotesize Boundary of $S(s)$ for $s=0,75$, $s= 1,25$ and $s=1,5$.}}
		\label{fig:Example}
	\end{figure}
	
We set $f(x)= \inf\{s : x \in S(s)\}$. Then $f$ is lower semicontinuous, quasiconvex and locally Lipschitz on the interior of its domain. Moreover, for every $s\in [0,2]$, the set $[f=s]\equiv\bd S(s)$ is a smooth manifold, whose minimal value of the internal curvature is $s-1$ for $s\in [1,2]$, and $s$ for $s\in [0,1]$.\hfill$\Diamond$
\end{example}


\bigskip	

	Let $\alpha_1,\alpha_2\in \R$ such that $\inf f <\alpha_1<\alpha_2<\sup f$. The goal of this section is to show that under~\ref{hyp:NonemptyIntArgmin}--\ref{hyp:BoundedInternalCurvature}, the function $f$ admits steepest descent curves, which locally induce a foliation of the annulus $[\alpha_1\leq f\leq \alpha_2]$.\medskip

	The first step is the following proposition that shows that prox-regularity of the boundary entails in fact smoothness of it.

	\begin{lemma}[Smoothness of the boundaries] \label{lem-mnfd} Under~\ref{hyp:NonemptyIntArgmin}--\ref{hyp:BoundedInternalCurvature}, for every $\alpha\in (\inf f,\sup f)$, the set $\mathcal{M}= \bd([f\leq\alpha])\cap \into(\dom f)$ is a $\mathcal{C}^{1,1}$-submanifold.
	\end{lemma}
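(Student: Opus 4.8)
The plan is to reduce the statement to a uniform two-sided ball condition along $\mathcal M$ and then invoke the classical equivalence between such a condition and $\mathcal C^{1,1}$-regularity of the boundary. Write $C=[f\le\alpha]$, which is convex by quasiconvexity and has nonempty interior by~\ref{hyp:NonemptyIntArgmin}; hence $C=\overline{\into(C)}$ and $\bd C=\bd(\into C)$. Taking $\beta=\alpha$ in~\ref{hyp:BoundedInternalCurvature} produces $r>0$ such that the complementary set $S:=\R^d\setminus\into(C)$ is $r$-prox-regular, and clearly $\bd S=\bd C$. Since on $\into(\dom f)$ the continuity of $f$ together with~\ref{hyp:BoundedSlope} gives $\bd C\cap B(x_0,\delta)=[f=\alpha]\cap B(x_0,\delta)$ (the displayed identity preceding the lemma), it suffices to produce, for each $x_0\in\mathcal M$, an interior ball and an exterior ball of a common radius tangent to $\bd C$ at $x_0$, with the radius uniform in $x_0$.

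The \emph{exterior ball} is immediate from convexity: by the supporting hyperplane theorem there is a unit vector $\nu$ with $C\subset\{z:\langle\nu,z-x_0\rangle\le0\}$, so $B(x_0+R\nu,R)\cap C=\emptyset$ for every $R>0$, in particular for $R=r$. The \emph{interior ball} is precisely where~\ref{hyp:BoundedInternalCurvature} enters. I would pick $p_n\in\into(C)$ with $p_n\to x_0$ (possible since $x_0\in\bd C=\overline{\into C}$); for $n$ large $d_S(p_n)<r$, so by $r$-prox-regularity $\proj_S(p_n)=:y_n$ is well defined, $y_n\to x_0$, and $w_n:=(p_n-y_n)/d_S(p_n)$ is a unit proximal normal to $S$ at $y_n$. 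The uniform ball characterization of prox-regularity, equivalent to the definition in~\eqref{eq:ProxReg-Enlargement} (see~\cite{ColomboThibault2010prox,Thibault2023}), yields $B(y_n+rw_n,r)\cap S=\emptyset$. Passing to a subsequence $w_n\to w\in\Sph_d$ and taking limits gives $B(x_0+rw,r)\cap S=\emptyset$, i.e. $B(x_0+rw,r)\subset\into(C)$, the desired interior ball, with $w$ pointing into $C$. Because an interior ball tangent at $x_0$ forces the supporting hyperplane of the convex set $C$ to be unique, the outer normal $\nu(x_0)=\nu$ is uniquely determined and $w=-\nu(x_0)$; in particular $\bd C$ is differentiable at $x_0$.

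With a uniform interior and exterior ball of radius $r$ at every point of $\mathcal M$, I would conclude $\mathcal C^{1,1}$-regularity as follows. Choosing coordinates with $\nu(x_0)$ as the last axis, the two tangent balls sandwich $\bd C$ between two spherical caps of radius $r$, so near $x_0$ the boundary is the graph of a function $\varphi$ over the tangent hyperplane with $\varphi(0)=0$, $\nabla\varphi(0)=0$, and the caps force $|\varphi(u)|\le r-\sqrt{r^2-\|u\|^2}=O(\|u\|^2)$; thus $\varphi$ is differentiable. The Lipschitz regularity of $\nabla\varphi$, equivalently of the Gauss map $x\mapsto\nu(x)$, is the analytic heart of the argument: the interior and exterior $r$-balls at two nearby boundary points $x,y$ constrain their normals so that $\|\nu(x)-\nu(y)\|\le\tfrac1r\|x-y\|$ on $\mathcal M$, whence $\nabla\varphi$ is Lipschitz and $\mathcal M$ is a $\mathcal C^{1,1}$-submanifold.

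I expect the main obstacle to be this last quantitative step, namely deriving a Lipschitz (rather than merely continuous) Gauss map from the two-sided ball condition, together with the bookkeeping needed to keep the radius uniform over $\mathcal M$; the interior-ball extraction and the exterior ball are comparatively routine. I would either carry out the sandwiching estimate directly or cite the known characterization of $\mathcal C^{1,1}$-domains by the uniform two-sided ball condition.
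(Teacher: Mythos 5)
Your proposal is correct, but it takes a genuinely different route from the paper's. The paper never mentions tangent balls: it identifies the tangent space by nonsmooth calculus, combining the intersection formula $T_{\bd S}(x)=T_S(x)\cap T_U(x)$ from \cite[Theorem~6.42]{RockafellarWets1998} with the complement identity $T_S(x)=-T_U(x)$ from \cite[Proposition~2.3]{CzarneckiThibault2018} to conclude that $T_{\bd S}(x)$ is a hyperplane, and then obtains Lipschitz continuity of the unit normal from the representation $\hat{n}(x)=\frac{1}{\varepsilon}(\proj(x,U_{-\varepsilon})-x)$, where the outer parallel set $U_{-\varepsilon}$ is $(r+\varepsilon)$-prox-regular and metric projections onto prox-regular sets are Lipschitz on small enlargements. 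You instead reduce everything to a uniform two-sided ball condition: exterior balls of every radius come free from convexity (supporting half-spaces), and your extraction of the interior $r$-ball from the $r$-prox-regularity of $S=\R^d\setminus\into(C)$ — projecting interior points, taking unit proximal normals, and passing to the limit in the open-ball condition — is carried out correctly, as is the uniqueness of the supporting hyperplane and the quadratic sandwich giving differentiability. The step you flag as the main obstacle is actually a two-line computation in your setting, so your claimed constant is right: for $x,y\in\mathcal{M}$ the interior ball at $x$ is disjoint from the exterior ball at $y$ and vice versa, which gives $\|(x-y)-r(\nu(x)+\nu(y))\|\geq 2r$ and $\|(x-y)+r(\nu(x)+\nu(y))\|\geq 2r$; squaring, adding, and using $\|\nu(x)+\nu(y)\|^2+\|\nu(x)-\nu(y)\|^2=4$ yields exactly $\|\nu(x)-\nu(y)\|\leq\frac{1}{r}\|x-y\|$. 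Be warned that using only \emph{one} of the two disjoint pairs (e.g. interior ball at $y$ against the supporting half-space at $x$) produces merely a H\"{o}lder-$\frac{1}{2}$ bound, so the symmetric pairing is essential if you do the estimate by hand; alternatively, convexity and coercivity give $C=K+r\Ball$ with $K=\{z:\bar B(z,r)\subset C\}$, whence $\nu=\frac{1}{r}(\id-\proj_K)$ on $\bd C$ is $\frac{1}{r}$-Lipschitz because $\id-\proj_K$ is nonexpansive for convex $K$ — a mirror image of the paper's projection formula, with the inner rather than the outer parallel body. In sum, your argument is more elementary and self-contained (Euclidean geometry plus the ball characterization of prox-regularity, or a citation to the classical two-sided-ball characterization of $\mathcal{C}^{1,1}$ domains, which is legitimate), while the paper's heavier cone calculus buys, as the authors remark, an explicit description of the tangent and normal cones of $S$ and $U$ that the rest of the paper exploits.
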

	
	\begin{proof}
		Let $S = [f\leq\alpha]$ and $U = \R^d\setminus \into(S)$. Since $U$ is $r$-prox-regular, using \cite[Theorem~6.42]{RockafellarWets1998} (and noting that prox-regularity entails regularity in the sense of \cite[Definition 6.4]{RockafellarWets1998}), we get that
		\[
		T_{\bd S}(x) = T_{S}(x)\cap T_{U}(x)\quad \forall x\in \bd S.
		\]
		Moreover, since $S$ is convex with nonempty interior, we can apply \cite[Proposition 2.3]{CzarneckiThibault2018} to deduce that 
		\[
		T_{S}(x) = -T_{U}(x),\quad \forall x\in \bd S.
		\]
Combining the above equations we deduce that $T_{\bd S}(x)$ is a vector space. Using \cite[Proposition~2.113 (a5)]{Thibault2023}, we get $\R^d\setminus T_{S}(x) \subset T_U(x)$ and consequently $\bd T_S(x) \subset T_{\bd S}(x)$. We conclude that $T_{\bd S}(x)$ is of codimension 1. Therefore, for every $x\in \bd S$, there exists a unique unit vector $\hat{n}(x)\in \Sph_d$ such that
		\[
		N(S,x) = -N(U,x) = \R_+\hat{n}(x).
		\]
		Thus, $N(\bd S,x) = \R\hat{n}(x)$. Since $S$ is convex, the mapping $\hat{n}:\bd S\to \Sph_d$ is continuous. Furthermore, since $U$ is $r$-prox-regular for some $r>0$, then the set $U_{-\varepsilon} =\{ z\ :\ d_S(x)\geq \varepsilon\}$ must be $(r+\varepsilon)$-prox-regular. By noting that 
\[ 
\hat{n}(x) = \frac{1}{\varepsilon}(\proj(x,U_{-\varepsilon})-x),\quad \forall x\in \into(\dom f)\cap\bd S, 
\]
we deduce that $\hat{n}: \into(\dom f)\cap\bd S\to \Sph_d$ is also Lipschitz-continuous and the proof is complete.
	\end{proof}

\begin{remark} An alternative proof of the above lemma can be derived using the enhanced Baillon-Haddad theorem of \cite{PV2021}, by representing the convex set $[f\leq \alpha]$ as the epigraph of a convex function over an appropriate subspace of codimension~1. The above presentation aims at further describing the behavior of the tangent and normal cones of $S=[f\leq \alpha]$ and $U=\R^d\setminus\into(S)$.
	\end{remark}

	\begin{example}
Smoothness of $\bd[f\leq \alpha]$ does not entail that this set coincides with the corresponding level set $[f=\alpha]$. Discrepancies may appear due to the cutting effect of the boundary of the domain as illustrated in the following example: Set $D = \mathrm{co}\Big( \Ball_2 \cup \big( (3,0)+ \Ball_2 \big)\Big)\subset \R^2$ and define $f:\R^2\to \{+\infty\}$ given by
		
		\begin{equation}\label{eq:FunctionSmoothCutSublevelSets}
			f(x,y) = \begin{cases}
				\min\{t:\, (x,y)\in (t,0)+\Ball_2\},\quad&\text{ if }(x,y)\in D\\
				\phantom{triade}+\infty,&\text{ otherwise.}
			\end{cases}
		\end{equation}
The above function is quasiconvex and its sublevel sets are given by $[f\leq t] = \mathrm{co}(\Ball_2 \cup ((t,0)+\Ball_2))$, for $t\in [0,3]$, $[f\leq t] = \dom f = D$ if $t\geq 3$, and $[f\leq t] =\emptyset$ if $t<0$ (see Figure~\ref{fig:Example-CutSublevelSets} below). 
		
		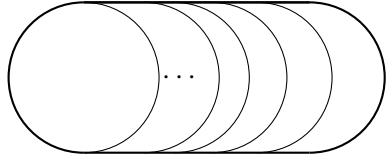
\begin{figure}[h]
			\centering
			\begin{tikzpicture}
				\draw[thick] (3,-1) arc(-90:90:1)--(0,1);
				\draw[thick] (0,-1)--(3,-1);
				\draw[thick] (0,1)--(3,1);
				\draw[thick] (0,1) arc(90:270:1)--(0,-1);
				{\foreach \x in {0.8, 1.2,1.7,2.3}
					\draw (\x,1) arc(90:-90:1)--(\x,-1);
				}
				\draw (0,1) arc(90:-90:1)--(0,-1);
				\node at (1.3,0) {$\cdots$};
			\end{tikzpicture}
\caption{{\footnotesize Each sublevel set  has a smooth boundary which does not coincide with the corresponding level set.}}
			\label{fig:Example-CutSublevelSets}
		\end{figure}
\smallskip

It is easy to see that $f$ satisfies~\ref{hyp:NonemptyIntArgmin},~\ref{hyp:BoundedSlope} and~\ref{hyp:BoundedInternalCurvature}. The second one follows from 
the remark that for every angle $\theta\in [-\pi/2,\pi/2]$ one has that
		\[ 
		f(\cos(\theta)+t,\sin(\theta)) = t,\quad \forall t\in [0,3],
		\]
and consequently $\slope{f}(x,y) \geq 1$ for all $(x,y)\in \dom f\setminus\argmin f = D\setminus\Ball_2$. The first and third hypotheses follows from the construction.\hfill$\Diamond$
	\end{example}
	\bigskip
	
In what follows, let $\alpha_1, \alpha_2 \in \R$ be such that $\alpha_1 < \alpha_2$ and set $$T:= \alpha_2-\alpha_1 >0.$$ 
We consider the \textit{annulus} set $$\mathcal{R}(\alpha_1,\alpha_2) = [\alpha_1\leq f\leq \alpha_2]$$
as well as the decreasing moving set maps $\widehat{\mathcal{S}}$, $\widehat{\mathcal{U}}$ given by
\begin{equation}\label{eq:MovingSetsSandU}
    \begin{cases}
              \phantom{tr}\widehat{\mathcal{S}}:[0,T]\tto \R^d \medskip\\
	    \phantom{tr}\widehat{\mathcal{S}}(t) := [f\leq \alpha_2 - t] 
    \end{cases}\text{ and }\quad \begin{cases}
                \phantom{tr}\widehat{\mathcal{U}}:[-T,0]\tto \R^d \medskip \\
                \phantom{tr}\widehat{\mathcal{U}}(\tau):=\underbrace{\R^d\setminus\into([f\leq \alpha_2+\tau])}_{[\into(\widehat{\mathcal{S}}(-\tau))]^c}.
    \end{cases}
\end{equation}

The following lemma establishes the variational regularity of the sublevel sets of $f$, under hypotheses~\ref{hyp:NonemptyIntArgmin}-\ref{hyp:BoundedInternalCurvature}. Namely, we prove the Lipschitz continuity of the set-valued maps given in~\eqref{eq:MovingSetsSandU}. Similar results involving metric regularity can be obtained as well using the metric slope (see, e.g., \cite{AzeCorvellec2004, Ioffe2017}). 
\begin{lemma}\label{lemma:StandardHyp} The moving set maps $\widehat{\mathcal{S}}$ and $\widehat{\mathcal{U}}$ are Lipschitz continuous provided~\ref{hyp:NonemptyIntArgmin}-\ref{hyp:BoundedSlope} hold. Moreover, $\widehat{\mathcal{U}}$ is uniformly $r$-prox-regular for some $r>0$, provided~\ref{hyp:BoundedInternalCurvature} holds.
\end{lemma}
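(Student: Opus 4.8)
The plan is to establish the two assertions separately. For the Lipschitz continuity of the moving sets, I would first observe that the prox-regularity statement for $\widehat{\mathcal{U}}$ is essentially a restatement of \ref{hyp:BoundedInternalCurvature}: since we work on a compact interval $[\alpha_1,\alpha_2]\subset(\inf f,\sup f)$, a standard compactness argument on the values $\beta$ ranging over $[\alpha_1,\alpha_2]$ lets me extract from the local $(\eta,r)$ provided by \ref{hyp:BoundedInternalCurvature} a single uniform radius $r>0$ such that $\R^d\setminus\into([f\le\beta])$ is $r$-prox-regular for all $\beta$ in the range. Since $\widehat{\mathcal{U}}(\tau)=\R^d\setminus\into([f\le\alpha_2+\tau])$ with $\alpha_2+\tau\in[\alpha_1,\alpha_2]$, this gives the uniform $r$-prox-regularity directly.

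For the Lipschitz continuity of $\widehat{\mathcal{S}}$ and $\widehat{\mathcal{U}}$, the key is to control the Hausdorff distance between nearby sublevel sets. The natural route is to use \ref{hyp:BoundedSlope}: on the compact annulus region the slope is bounded below by some $\ell>0$, which quantitatively forces sublevel sets at close values to be close. Concretely, I would show that for $\beta_1<\beta_2$ in $[\alpha_1,\alpha_2]$, every point of $[f=\beta_2]$ lies within distance $(\beta_2-\beta_1)/\ell$ of $[f\le\beta_1]$: starting from a point where $f=\beta_2$, the lower bound on the slope allows one to descend and reach the lower level set while traveling a distance controlled by $(\beta_2-\beta_1)/\ell$. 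The reverse estimate (points of the smaller sublevel set being close to the larger one) is trivial by inclusion. Translating these one-sided bounds into a bound on $d_H$ of the form $d_H(\widehat{\mathcal{S}}(t),\widehat{\mathcal{S}}(s))\le L|t-s|$ yields Lipschitz continuity of $\widehat{\mathcal{S}}$ with $L=1/\ell$, and the same estimate transfers to $\widehat{\mathcal{U}}$ since complements of interiors of nested convex sets inherit the Hausdorff control on their boundaries.

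The main obstacle I anticipate is making the descent argument from the slope lower bound fully rigorous and uniform across the whole annulus. The slope bound in \ref{hyp:BoundedSlope} is stated locally (a $\delta$-ball around each point with its own $\ell$), so I would need a compactness argument to obtain a uniform $\ell>0$ valid throughout $\mathcal{R}(\alpha_1,\alpha_2)$, and then argue that the pointwise slope bound integrates into a genuine distance estimate between level sets. For locally Lipschitz functions one cannot simply follow a gradient curve, so I expect the cleanest route is a metric/variational one: either invoke the error-bound and metric-regularity consequences of a uniform slope lower bound (as alluded to in the remark preceding the lemma, cf.\ \cite{AzeCorvellec2004,Ioffe2017}), or apply Ekeland's variational principle to produce, from any point of $[f=\beta_2]$, a nearby point in $[f\le\beta_1]$ with the required distance control. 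Care must also be taken at the boundary of $\dom f$, but since $\mathcal{R}(\alpha_1,\alpha_2)\subset\into(\dom f)$ by \ref{hyp:NonemptyIntArgmin} and coercivity, the relevant sets are compact and contained in the interior, so the domain-boundary issues illustrated in the preceding examples do not interfere with the estimates here.
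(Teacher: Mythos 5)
Your proposal follows essentially the same route as the paper's proof: a compactness argument upgrades the local bounds of~\ref{hyp:BoundedSlope} to a uniform slope bound $\slope{f}\geq\ell$ on the compact annulus $\mathcal{R}(\alpha_1,\alpha_2)$ (compactness coming from coercivity in~\ref{hyp:NonemptyIntArgmin}); the error bound of \cite[Theorem~2.1]{AzeCorvellec2004} then gives $d(x,[f\leq\alpha])\leq\ell^{-1}(f(x)-\alpha)^+$, hence the $(1/\ell)$-Lipschitz estimate for $\widehat{\mathcal{S}}$; and the uniform $r$-prox-regularity of $\widehat{\mathcal{U}}$ is extracted from~\ref{hyp:BoundedInternalCurvature} by precisely the finite-subcover argument over $\beta\in[\alpha_1,\alpha_2]$ that you sketch (the paper calls this step ``straightforward'').

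Two caveats. First, the transfer of the Lipschitz estimate from $\widehat{\mathcal{S}}$ to $\widehat{\mathcal{U}}$, which you dispatch with the phrase ``complements of interiors of nested convex sets inherit the Hausdorff control,'' is the one place where the paper does actual geometric work: for $x\in\widehat{\mathcal{U}}(-t)\cap\into(\widehat{\mathcal{S}}(s))$ it takes the exterior unit normal $\hat n$ of $\widehat{\mathcal{S}}(t)$ at $p=\proj_{\widehat{\mathcal{S}}(t)}(x)$, uses coercivity to find $\zeta>0$ with $p+\zeta\hat n\in\bd\widehat{\mathcal{S}}(s)$, and observes that $d(x,\widehat{\mathcal{U}}(-s))\leq\zeta=d(p+\zeta\hat n,\widehat{\mathcal{S}}(t))\leq\sup_{y\in\widehat{\mathcal{S}}(s)}d(y,\widehat{\mathcal{S}}(t))$. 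Your asserted fact is true for the sets at hand, but it is not a tautology: the argument genuinely uses convexity and boundedness of the sublevel sets, not mere nestedness, so your write-up should include this (or an equivalent) step. Second, your closing claim that $\mathcal{R}(\alpha_1,\alpha_2)\subset\into(\dom f)$ follows from~\ref{hyp:NonemptyIntArgmin} and coercivity is false: in the paper's own example \eqref{eq:FunctionSmoothCutSublevelSets} the annulus meets $\bd(\dom f)$ along the flat parts of the stadium-shaped sublevel sets (Figure~\ref{fig:Example-CutSublevelSets}). Fortunately this does not damage the lemma: by \eqref{eq:slope-def} the slope is $+\infty$ off $\dom f$, and~\ref{hyp:BoundedSlope} is assumed at \emph{every} point of $\dom f\setminus\argmin f$, boundary points included, so the uniform bound $\ell$ and the error bound of \cite{AzeCorvellec2004} apply on all of $\mathcal{R}(\alpha_1,\alpha_2)$ with no interiority assumption; the boundary of the domain becomes an issue only later, for the injectivity statements of Proposition~\ref{prop:InvertibleSweeping}, not here.
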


\begin{proof}
Thanks to~\ref{hyp:BoundedSlope} for each $x\in \mathcal{R}(\alpha_1,\alpha_2)$, there exist $\delta_x>0$ and $\ell_x>0$ such that
\[
\slope{f}(z)\geq \ell_x,\qquad \text{for all }\,z\in B(x,\delta_x). 
\]
Since $\mathcal{R}(\alpha_1,\alpha_2)$ is compact due to the coercivity of $f$, we deduce that there is $\ell>0$ such that $\slope{f}(x)\geq\ell$ for all $x\in \mathcal{R}(\alpha_1,\alpha_2)$. By \cite[Theorem~2.1]{AzeCorvellec2004}, for all $x\in\mathcal{R}(\alpha_1,\alpha_2)$ and $\alpha\in [\alpha_1,\alpha_2]$ we have:
		\[
		d(x, [f\leq \alpha]) \leq \frac{1}{\ell}(f(x) - \alpha)^+.
		\]
		Now, choose $s,t\in [0,T]$ and suppose that $s<t$. Then,
		\begin{align*}
			d_H(\widehat{\mathcal{S}}(t),\widehat{\mathcal{S}}(s)) &= \sup_{x\in \widehat{\mathcal{S}}(s)} d(x,\widehat{\mathcal{S}}(t)) = \sup_{x\in \widehat{\mathcal{S}}(s)} d(x,[f \leq \alpha_1 - t]) \\
			&\leq \sup_{x\in \widehat{\mathcal{S}}(s)} \frac{1}{\ell} (f(x) - \alpha_1 + t)^+= \frac{1}{\ell}|t-s|.
		\end{align*}
		Thus, $\widehat{\mathcal{S}}$ is a $(1/\ell)$-Lipschitz set-valued map. Now take
\[
x\in \widehat{\mathcal{U}}(-t)\setminus \widehat{\mathcal{U}}(-s) = [\mathrm{int} (\widehat{\mathcal{S}}(t))]^c\setminus[\mathrm{int} (\widehat{\mathcal{S}}(s))]^c= \widehat{\mathcal{U}}(-t) \cap \into(\widehat{\mathcal{S}}(s))
\]
(notice that by~\ref{hyp:NonemptyIntArgmin} the above set is nonempty) and let $\hat{n}$ the exterior unit vector of $\widehat{\mathcal{S}}(t)$ at $\proj_{\widehat{\mathcal{S}}(t)}(x)$. Then, since $f$ is coercive, there exists $\zeta>0$ such that $\proj_{\widehat{\mathcal{S}}(t)}(x)+\zeta\hat{n}\in \bd \widehat{\mathcal{S}}(s) = \bd \widehat{\mathcal{U}}(-s)$. Clearly 
		\[
		d(x,\widehat{\mathcal{U}}(-s))\leq \zeta = d(\proj_{\widehat{\mathcal{S}}(t)}(x)+\zeta\hat{n},\widehat{\mathcal{S}}(t)) \leq \sup_{y\in \widehat{\mathcal{S}}(s)} d(y,\widehat{\mathcal{S}}(t)).
		\] 
		With this in mind, we can write
		\begin{align*}
			d_H(\widehat{\mathcal{U}}(-t),\widehat{\mathcal{U}}(-s)) = \sup_{x\in \widehat{\mathcal{U}}(-t)\,\cap\, \widehat{\mathcal{S}}(s)} d(x,\widehat{\mathcal{U}}(-s))
			&\leq \sup_{y\in \widehat{\mathcal{S}}(s)}d(y,\widehat{\mathcal{S}}(t))\leq \frac{1}{\ell}|t-s|.
		\end{align*}
Thus, $\widehat{\mathcal{U}}$ is also $\frac{1}{\ell}$-Lipschitz. The last assertion of the statement is straightforward.
\end{proof}
	
	
In what follows, we consider the sweeping process~\eqref{eq:SweepingProcess} for the moving set maps $\widehat{\mathcal{S}}:[0,T]\tto\R^d$ and $\widehat{\mathcal{U}}:[-T,0]\tto\R^d$, that is,
	\begin{equation}\label{eq:SweepingSandU}
		\begin{aligned}
			\begin{cases}
				\phantom{a}\dot{u}(t)\in -N(\widehat{\mathcal{S}}(t);u(t)),\,\,\, t\in [0,T] \smallskip\\
				\phantom{a}u(0) = x_0 \in \widehat{\mathcal{S}}(0)
			\end{cases}
			&\text{ and }\quad
			&\begin{cases}
				\phantom{a}\dot{v}(\tau)\in -N(\widehat{\mathcal{U}}(\tau);v(\tau)),\,\,\,\tau\in [-T,0] \smallskip\\
				\phantom{a}v(-T) = y_0 \in \widehat{\mathcal{U}}(-T)
			\end{cases}
		\end{aligned}
	\end{equation}
	
We now set 
\begin{equation}\label{eq:R}
M = \bd([f\leq\alpha_2]) \qquad \text{ and } \qquad \mathcal{R}= [f\leq\alpha_2]\setminus \into([f\leq\alpha_1]),
\end{equation}
and consider the mapping
\begin{align}
	u:[0,T]&\times M\to \mathcal{R} \label{eq:u}\\
	(t,m)&\mapsto u(t,m), \notag
\end{align}
where $u(\cdot,m)$ is the unique solution of the first sweeping process differential inclusion of \eqref{eq:SweepingSandU} with initial condition $x_0 = m$.  In what follows, we endow the set $[0,T]\times M$ with the distance:
\begin{equation}\label{eq:D}
D\left((t_1,m_1), (t_2,m_2) \right) := |t_1-t_2| + \|m_1-m_2\|,\quad \forall t_1,t_2\in [0,T],\, \forall m_1,m_2 \in M.
\end{equation}
\begin{proposition}[Inversion of the sweeping flow]\label{prop:InvertibleSweeping} Let $m\in M\,\bigcap\,\into(\dom f)$. Suppose that $f$ is Lipschitz-continuous around $m\in M$. Then, under~\ref{hyp:NonemptyIntArgmin}--\ref{hyp:BoundedInternalCurvature}, the mapping $u:[0,T]\times M\to \mathcal{R}$ is one-to-one and bi-Lipschitz around 
$(0,m)$.
\end{proposition}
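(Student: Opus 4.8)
The plan is to exploit the reversibility built into \ref{hyp:NonemptyIntArgmin}--\ref{hyp:BoundedInternalCurvature}: the forward sweeping process of $\widehat{\mathcal S}$, which shrinks the sublevel sets, and the sweeping process of $\widehat{\mathcal U}$, which grows the complement of the interior, trace the same curves in opposite time directions. I first fix a neighborhood $V$ of $(0,m)$ in $[0,T]\times M$ on which $f$ is $L$-Lipschitz, and record two reductions. Since $f(m)=\alpha_2$, Proposition~\ref{prop:UniqueSolutionSweeping}(ii)--(iii) gives that $t\mapsto u(t,m)$ stays on the moving boundary with $f(u(t,m))=\alpha_2-t$, while Proposition~\ref{prop:UniqueSolutionSweeping}(iv), combined with \ref{hyp:BoundedSlope} (which forces $\limSlope f\ge\ell$) and $L$-Lipschitzness (which forces $\slope f\le L$), yields the speed bound $\tfrac1L\le\|\dot u(t,m)\|\le\tfrac1\ell$ for a.e.\ $t$. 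By Lemma~\ref{lem-mnfd} the relevant level sets are $\mathcal C^{1,1}$, so the normal cone at $u(t,m)$ is the line $\R\hat n$ with $\hat n$ the Lipschitz outer unit normal; hence $\dot u(t,m)=-\lambda(t)\hat n(u(t,m))$ with $\lambda(t)\in[\tfrac1L,\tfrac1\ell]$.

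The heart of the argument is the reversibility identity. Given $x=u(t,m)$ I set $w(\tau):=u(-\tau,m)$ for $\tau\in[-t,0]$. Then $w(\tau)\in[f=\alpha_2+\tau]=\bd\widehat{\mathcal U}(\tau)$, and since $\dot w(\tau)=\lambda(-\tau)\hat n(w(\tau))$ with $\lambda\ge0$, while the proof of Lemma~\ref{lem-mnfd} gives $N(\widehat{\mathcal U}(\tau);w(\tau))=-\R_+\hat n(w(\tau))$, we obtain $\dot w(\tau)\in-N(\widehat{\mathcal U}(\tau);w(\tau))$. Thus $w$ solves the $\widehat{\mathcal U}$-sweeping inclusion in~\eqref{eq:SweepingSandU} with $w(-t)=x$; by Lemma~\ref{lemma:StandardHyp} and the uniqueness of~\eqref{eq:SweepingProcess}, it is \emph{the} solution, so that $w(0)=m$. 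In other words, $m$ is recovered from $x$ by running the $\widehat{\mathcal U}$-flow from $x$ forward to time $0$.

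Injectivity follows at once: if $u(t_1,m_1)=u(t_2,m_2)=x$, then $\alpha_2-t_1=f(x)=\alpha_2-t_2$ gives $t_1=t_2=:t$, and the reversibility identity expresses each $m_i$ as the time-$0$ value of the unique $\widehat{\mathcal U}$-solution with $w(-t)=x$, whence $m_1=m_2$. For the forward Lipschitz estimate I split $\|u(t_1,m_1)-u(t_2,m_2)\|$ into a space and a time part. The space part uses the convexity of the values $\widehat{\mathcal S}(t)$ and Moreau's non-expansiveness of convex sweeping processes, giving $\|u(t,m_1)-u(t,m_2)\|\le\|m_1-m_2\|$; the time part uses the speed bound, giving $\|u(t_1,m)-u(t_2,m)\|\le\tfrac1\ell|t_1-t_2|$. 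Together these show that $u$ is $D$-Lipschitz.

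Finally, the inverse. The time coordinate is recovered by $t=\alpha_2-f(x)$, so $|t_1-t_2|\le L\|x_1-x_2\|$. For the space coordinate I compare the $\widehat{\mathcal U}$-solutions $w_1,w_2$ with $w_i(-t_i)=x_i$ and $w_i(0)=m_i$; transporting both to the common time $-\min\{t_1,t_2\}$ and controlling the gap through $\|u(t_1,m_2)-u(t_2,m_2)\|\le\tfrac1\ell|t_1-t_2|$ (itself a consequence of reversibility and the forward bound), I invoke the Lipschitz dependence of the $\widehat{\mathcal U}$-sweeping on its initial data. This is precisely where the prox-regularity from Lemma~\ref{lemma:StandardHyp} is indispensable: unlike $\widehat{\mathcal S}$, the set $\widehat{\mathcal U}(\tau)$ is not convex, so non-expansiveness is unavailable and one must instead use the hypomonotonicity of the normal cone of an $r$-prox-regular set to run a Gr\"onwall estimate, yielding $\|m_1-m_2\|\le C\|x_1-x_2\|$ on $V$. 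I expect this inverse-direction estimate to be the main obstacle, both because it rests on the non-convex stability theory and because it is the step that genuinely consumes hypothesis~\ref{hyp:BoundedInternalCurvature}.
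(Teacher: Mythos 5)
Your proposal is correct and follows the same global architecture as the paper's proof --- forward Lipschitz continuity from Moreau's non-expansiveness on the convex-valued process $\widehat{\mathcal S}$ plus a speed bound in time, recovery of the time coordinate via $|t_1-t_2|=|f(u_1)-f(u_2)|\leq L\|u_1-u_2\|$, and injectivity plus the inverse estimate via uniqueness and stability of the reversed $\widehat{\mathcal U}$-sweeping process --- but it diverges at the one step you correctly identify as the crux. Where you propose a continuous-time argument (hypomonotonicity of the proximal normal cone of the $r$-prox-regular sets $\widehat{\mathcal U}(\tau)$, giving $\frac{d}{d\tau}\|v_1-v_2\|^2\leq \frac{2K}{r}\|v_1-v_2\|^2$ a.e.\ and then Gr\"onwall), the paper instead runs a discrete catching-up scheme: it builds polygonal curves $v_{i,k}$ by successive projections onto $\widehat{\mathcal U}(s_j)$ along a uniform partition, uses that $\proj(\cdot,\widehat{\mathcal U}(s_j))$ is $(1-\theta)^{-1}$-Lipschitz on the enlargement $\widehat{\mathcal U}(s_j)+\theta r\,\Ball_d$ with $\theta=Kt_2/(rk)$, iterates to get the factor $(1-\gamma/k)^{-k}$, and lets $k\to\infty$. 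Both routes consume~\ref{hyp:BoundedInternalCurvature} in exactly the same way and produce the identical constant $e^{r^{-1}Kt_2}$; yours is shorter if one grants the hypomonotonicity inequality, the a.e.\ velocity bound $\|\dot v_i\|\leq K$, and absolute continuity of $\tau\mapsto\|v_1(\tau)-v_2(\tau)\|^2$, while the paper's discretization avoids differentiating along trajectories and rests only on the Lipschitz behavior of projections onto prox-regular sets and the convergence of the catching-up algorithm. Two further remarks: you actually justify the reversibility identity (via $N(\widehat{\mathcal U}(\tau);x)=-\R_+\hat n(x)$ from Lemma~\ref{lem-mnfd}) more explicitly than the paper, which merely asserts that $t\mapsto u(-t,m_i)$ solves the reversed inclusion; on the other hand your localization is slightly glossed --- one must shrink $\delta,\varepsilon$ so that $u([0,\varepsilon]\times\Gamma)\subset\into(\dom f)$ with $f$ Lipschitz on this compact image (as the paper does) before $f(u(t,m))=\alpha_2-t$ and the time-recovery estimate are legitimate, and your handling of unequal times via $\|u(t_1,m_2)-u(t_2,m_2)\|\leq \ell^{-1}|t_1-t_2|$ uses only the forward speed bound (no reversibility needed there), replacing the paper's trick of extending $v_1$ by the constant $\bar u_1$ on $[-t_2,-t_1]$.
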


\begin{proof}
Let us first show that $u$ is (globally) Lipschitz. Let us denote by $K$ the (common) Lipschitz constant of the moving set maps $\widehat{\mathcal{S}}(\cdot)$ and $\widehat{\mathcal{U}}(\cdot)$, given by Lemma~\ref{lemma:StandardHyp}.
Notice that every trajectory of a $K$-Lipschitz sweeping process map is a Lipschitz curve with the same constant $K$ (see e.g. \cite{Thibault2003}), that is,  for every $m\in M$ and $t_1,t_2\in [0,T]$ it holds
\[
\|u(t_1,m)-u(t_2,m)\|\leq K |t_1-t_2|.
\]
 Recall also (see \cite{Moreau1977}) that since $\widehat{\mathcal{S}}$ has convex values, the distance between two different trajectories is decreasing in time. We deduce directly that for $(t_1,m_1), (t_2,m_2)\in [0,T]\times M$ it holds:
		\begin{align*}
			\| u(t_1,m_1) - u(t_2,m_2) \| &\leq \|u(t_1,m_1) - u(t_2,m_1)\| +\|u(t_2,m_1) - u(t_2,m_2)\| \smallskip \\
			&\leq \Big( \max\{1,K\} \Big) D\left((t_1,m_1), (t_2,m_2) \right) 
		\end{align*} 
Take $\delta>0$ so that $B(m,\delta) \subset \mathrm{int} (\dom f)$. Recalling the definition of the set $M$ from~\eqref{eq:R}, since $f$ is continuous on $B(m, \delta)$, we deduce that 
$$\Gamma \, := \, M\cap B(m,\delta) = [f =\alpha_2]\cap B(m, \delta).$$ 
Moreover, by continuity of the mapping $u$ (given in~\eqref{eq:u}), we get that there exists $\varepsilon>0$ such that
		\[
u([0,\varepsilon]\times \Gamma)\subset \into(\dom f).
		\]
Shrinking $\delta$ and $\varepsilon$ if necessary, we can assume that $f$ is Lipschitz-continuous on $u([0,\varepsilon]\times \Gamma)$. Evoking Proposition~\ref{prop:UniqueSolutionSweeping} we deduce that for every $(t,m)\in (0,\varepsilon]\times \Gamma$, $f(u(t,m)) = \alpha_2-t$, that is, the mapping $u$ takes values in $\mathcal{R}$, given in~\eqref{eq:R}. Moreover,
		\begin{equation}\label{eq:L}
		|t_1 - t_2| = |f(u_1) - f(u_2)| \,\leq\, L\| u_1 - u_2 \|,
		\end{equation}
where $L$ is the Lipschitz constant of $f$ on the compact set $u([0,\varepsilon]\times \Gamma)\subset \into(\dom f)$. \smallskip\newline
Let us now show that $u(\cdot,\cdot)$ is one-to-one on $[0,\varepsilon]\times \Gamma$. \smallskip\newline
To this end, let $(t_1,m_1),(t_2,m_2)\in [0,\varepsilon]\times \Gamma$ be such that $u(t_1,m_1) = u(t_2,m_2)$. Since 
$$f(m_1) = f(m_2) = \alpha_2,$$ we get that
		\[
		\alpha_2-t_1 = f(u(t_1,m_1)) = f(u(t_2,m_2)) = \alpha_2-t_2,
		\]
which yields that $t_1=t_2$. Let us denote by $\bar{t}$ the common value of $t_1=t_2$, and by $\bar{u}$ the common value of $u(t_1,m_1) = u(t_2,m_2)$. Consider the differential inclusion
		\begin{equation}\label{InProof:InverseDiffInclusion}
			\begin{cases}
				\phantom{a}\dot{v}(t) \in - N(\widehat{\mathcal{U}}(t); v(t)),\qquad t\in [-\bar{t},0],\\
				\phantom{a}v(-\bar{t}) = \bar{u}.
			\end{cases}
		\end{equation}
Hypotheses~\ref{hyp:NonemptyIntArgmin}--\ref{hyp:BoundedInternalCurvature} ensure that $\widehat{\mathcal{U}}:[-\bar{t},0]\tto\R^d$ is uniformly prox-regular and Lipschitz continuous, entailing that the above differential inclusion has a unique solution $v:[-\bar{t},0]\to \R^d$ (\textit{c.f.}  Proposition~\ref{prop:UniqueSolutionSweeping}). Noting that both $t\mapsto u(-t,m_1)$ and $t\mapsto u(-t,m_2)$ are solutions of the above differential inclusion, we deduce that
		\[
		m_1 = u(0,m_1) = v(0) = u(0,m_2) = m_2,
		\]
and consequently $(t_1,m_1) = (t_2,m_2)$, proving that $u(\cdot,\cdot)$ is one-to-one. \smallskip\newline
Let us now show that the flow can be reversed, and that the expansion of the reversed flow can be controlled: 
to this end, let $(t_1,m_1),(t_2,m_2) \in [0,\varepsilon]\times\Gamma$, with $t_1\leq t_2$, and set $\bar{u}_i = u(t_i,m_i)$, 
for $i \in \{1,2\}$. Consider the differential inclusions, for $i\in\{1, 2\}$:
		\begin{equation*}
			\begin{cases}
				\phantom{a}\dot{v}_i(t) \in - N(v_i(t), \widehat{\mathcal{U}}(t)),\qquad t\in [-t_2,0],\\
				\phantom{a} v_i(-t_2) = \bar u_i\,.
			\end{cases}\
		\end{equation*}
Thanks to Lemma~\ref{lemma:StandardHyp} (see, e.g., \cite{Thibault2003}) and  Proposition~\ref{prop:UniqueSolutionSweeping}, the above differential inclusions have unique solutions, 
$v_1,v_2:[-t_2,0]\to \R^d$. It is not hard to see that $v_2(t) = u(-t,m_2)$ for every $t\in [-t_2,0]$ and that
\[
v_1(t) = \begin{cases}
\phantom{tri}\bar u_1,\qquad&\mbox{ if }t\in [-t_2,-t_1]\\
u(-t,m_1),&\mbox{ if }t\in [-t_1,0].
\end{cases}
\]
Let $\mathcal{P}_k := \{ s_0,\ldots,s_k \}$ be a uniform partition of the interval $[-t_2,0]$ with width
\[
|\mathcal{P}_k| := |s_{j}-s_{j-1}|=\frac{t_2}{k},\qquad\text{for all }\, j\in\{1,\dots,k\}.
\]
Let $r>0$ be such that the sets $\widehat{\mathcal{U}}(t)$ are $r$-uniformly prox-regular for all $t\in [-t_2,0]$ and take $k$ sufficiently large such that 
\begin{equation}\label{eq:theta}
\theta\,:=\,\frac{K\,|\mathcal{P}_k|}{r}=\frac{K\, t_2}{r\,k}\,<\,1.
\end{equation} 
Then for $i\in\{1,2\}$, we define the polygonal curve $v_{i,k}$ emanating from $u_i$ associated to $\mathcal{P}_k$ as follows:
		\[
		v_{i,k}(s_j) = \begin{cases}
			\phantom{trilem} \bar u_i,\qquad&\mbox{ if }j=0\\
			\phantom{a}\proj(v_{i,k}(s_{j-1}); \widehat{\mathcal{U}}(s_j)),&\mbox{ if }j\in \{1,\ldots,k\}.
		\end{cases}
		\]
Notice that all projections as well-defined. In particular, for every $j\in \{ 1,\ldots,k \}$
		\[
	v_{1,k}(s_{j-1}), v_{2,k}(s_{j-1}) \,\in\, \widehat{\mathcal{U}}(s_j) + \big( \underbrace{K|s_j - s_{j-1}|}_{\theta\,r}\big)\Ball_d \,=\, \widehat{\mathcal{U}}(s_j) + \theta\, r\,\Ball_d.
		\]
Since the projection $\proj(\cdot, \widehat{\mathcal{U}}(s_j))$ is Lipschitz with constant $\left (1 - \theta\right )^{-1}$ on the set 
\[
\widehat{\mathcal{U}}(s_j) + K|sj - s_{j-1}|\Ball_d \,\equiv \, \widehat{\mathcal{U}}(s_j) + \theta\,r\, \Ball_d 
\]
(see, e.g., \cite{ColomboThibault2010prox}) we obtain
		\begin{align*}
			\| v_{1,k}(s_j) - v_{2,k}(s_j) \| \leq (1-\theta)^{-1}\,\| v_{1,k}(s_{j-1}) - v_{2,k}(s_{j-1}) \|,\quad j\in\{1,\dots,k\}
		\end{align*}
and we deduce $\| v_{1,k}(0) - v_{2,k}(0) \| \,\leq \,(1-\theta)^{-k} \,\|u_1 - u_2\|.$\smallskip\newline
Setting $\gamma:=r^{-1}K\,t_2$ and recalling~\eqref{eq:theta}), we obtain 
\[
\| v_{1,k}(0) - v_{2,k}(0) \|  \, \leq \, \left(1- \frac {\gamma} {k}\right)^{-k} \,\|u_1 - u_2\|.
\]
Since $v_{i,k}$ converges uniformly to $v_i$ as $k\to \infty$ (see, e.g., \cite{ColomboThibault2010prox}), we conclude that
		\[
		\| m_1-m_2 \| = \| v_{1}(0) - v_{2}(0) \|\,\leq \,e^{r^{-1} K t_2}\| u_1 - u_2 \|.
		\]
Recalling~\eqref{eq:L} we conclude that
		\[
		D((t_1,m_1), (t_2,m_2)) \leq \left( L + e^{r^{-1} K T} \right)\|u(t_1,m_1) - u(t_2,m_2)\|.
		\]
This shows that $u$ is bi-Lipschitz and the proof is complete.
	\end{proof}
	
	
The above proposition shows that the mapping $u:[0,T]\times M\to\mathcal{R}$ locally induces a foliation of the annulus $\mathcal{R} = [\alpha_1\leq f\leq \alpha_2]$, near every point $m\in M\cap\into(\dom f)$ satisfying that $f$ is Lipschitz-continuous on a neighborhood of $m$. The problem appears at points in $\bd(\dom f)$ that belong to the boundary of several sublevel sets, since at these points the mapping $u$ loses injectivity. However, if $\dom f = \R^d$ and $f$ is locally Lipschitz, the neighborhood $[0,\varepsilon]\times \Gamma$ can be taken to be $[0,T]\times M$ and $u:[0,T]\times M\to \mathcal{R}$ induces a (complete) foliation of the whole annulus $\mathcal{R}$.  This is illustrated in Figure~\ref{fig:Foliation}.
		
	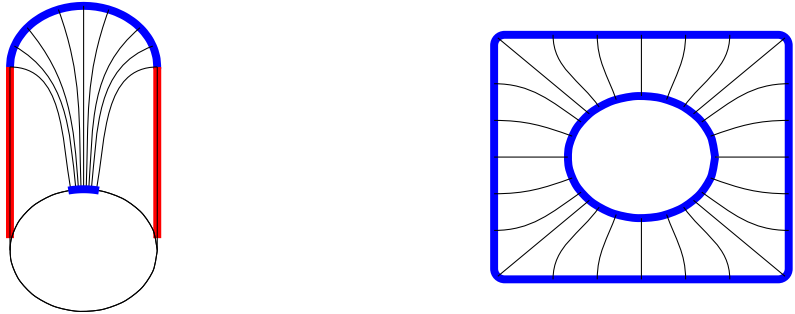
\begin{figure}[h]
		\begin{minipage}{0.45\textwidth}
		\centering
		\begin{tikzpicture}
			\begin{axis}[
				axis lines = none,
				ymin = -5, ymax = 2,
				xmin = -3.5, xmax = 3.5 
				]
				
				\draw[ samples = 20,	smooth, domain = -2:182, blue, line width=3pt,
				variable = \t	] plot ( 
				{cos(\t)}, {sin(\t)}
				);
				\draw[red, line width = 3pt] (axis cs: {-1},{0})--(axis cs: {-1},{-2.8});
				\draw[red, line width = 3pt] (axis cs: {1},{0})--(axis cs: {1},{-2.8});
				\draw[black] (axis cs: {-1},{0})--(axis cs: {-1},{-3});
				\draw[black] (axis cs: {1},{0})--(axis cs: {1},{-3});
				\draw[ samples = 20,	smooth, domain = 0:360, black,
				variable = \t	] plot ( 
				{cos(\t)}, {-3+sin(\t)}
				);
				
				\draw[ samples = 20,	smooth, domain = 0:360, black,
				variable = \t	] plot ( 
				{cos(\t)}, {-3+sin(\t)}
				);
				
				\draw[] (axis cs: {cos(80)},{-3+sin(80)}) to[out=80,in=180] (axis cs: {cos(0)},{sin(0)});
				\draw[] (axis cs: {cos(84)},{-3+sin(84)}) to[out=84,in=200] (axis cs: {cos(20)},{sin(20)});
				\draw[] (axis cs: {cos(86)},{-3+sin(86)}) to[out=86,in=220] (axis cs: {cos(40)},{sin(40)});
				\draw[] (axis cs: {cos(88)},{-3+sin(88)}) to[out=88,in=250] (axis cs: {cos(70)},{sin(70)});
				\draw[] (axis cs: {cos(90)},{-3+sin(90)}) to[out=90,in=270] (axis cs: {cos(90)},{sin(90)});
				\draw[] (axis cs: {cos(92)},{-3+sin(92)}) to[out=92,in=290] (axis cs: {cos(110)},{sin(110)});
				\draw[] (axis cs: {cos(94)},{-3+sin(94)}) to[out=94,in=310] (axis cs: {cos(140)},{sin(140)});
				\draw[] (axis cs: {cos(96)},{-3+sin(96)}) to[out=96,in=330] (axis cs: {cos(160)},{sin(160)});
				\draw[] (axis cs: {cos(100)},{-3+sin(100)}) to[out=100,in=0] (axis cs: {cos(180)},{sin(180)});

				\draw[ samples = 20,	smooth, domain = 78:102, blue, line width=3pt,
				variable = \t	] plot ( 
				{cos(\t)}, {-3+sin(\t)}
				);
				\end{axis}
		\end{tikzpicture}
	\end{minipage}
	\begin{minipage}{0.45\textwidth}
		\centering
		\begin{tikzpicture}
			\begin{axis}[
				axis lines = none,
				ymin = -3.5, ymax = 3.5,
				xmin = -3.5, xmax = 3.5 
				]
				
				\draw[ samples = 20,	smooth, domain = 0:360, blue, line width = 3pt,
				variable = \t	] plot ( 
				{cos(\t)}, {sin(\t)}
				);
				
				\draw[rounded corners, blue, line width = 3pt] (axis cs: {-2},{-2})--(axis cs: {-2},{2})--(axis cs: {2},{2})--(axis cs: {2},{-2})--cycle;

				\draw (axis cs: {-2},{0})--(axis cs: {cos(180)},{sin(180)});
				
				\draw (axis cs: {-2},{-0.6}) to[out=0,in=200] (axis cs: {cos(200)},{sin(200)});
				\draw (axis cs: {-2},{-1.2}) to[out=0,in=215]  (axis cs: {cos(215)},{sin(215)});
				
				\draw (axis cs: {-1.95},{-1.95})--(axis cs: {cos(225)},{sin(225)});

				\draw (axis cs: {-1.2},{-2}) to[out=90,in=235]  (axis cs: {cos(235)},{sin(235)});
				\draw (axis cs: {-0.6},{-2}) to[out=90,in=250] (axis cs: {cos(250)},{sin(250)});
							
				\draw (axis cs: {0},{-2})--(axis cs: {cos(270)},{sin(270)});
				
				\draw (axis cs: {0.6},{-2}) to[out=90,in=290] (axis cs: {cos(290)},{sin(290)});
				\draw (axis cs: {1.2},{-2}) to[out=90,in=305]  (axis cs: {cos(305)},{sin(305)});
				
				\draw (axis cs: {1.95},{-1.95})--(axis cs: {cos(315)},{sin(315)});
				
				\draw (axis cs: {2},{-1.2}) to[out=180,in=325]  (axis cs: {cos(325)},{sin(325)});
				\draw (axis cs: {2},{-0.6}) to[out=180,in=340] (axis cs: {cos(340)},{sin(340)});
				
				\draw (axis cs: {2},{0})--(axis cs: {cos(0)},{sin(0)});
				
				\draw (axis cs: {2},{0.6}) to[out=180,in=20] (axis cs: {cos(20)},{sin(20)});
				\draw (axis cs: {2},{1.2}) to[out=180,in=35]  (axis cs: {cos(35)},{sin(35)});
				
				\draw (axis cs: {1.95},{1.95})--(axis cs: {cos(45)},{sin(45)});
				
				\draw (axis cs: {1.2},{2}) to[out=270,in=55]  (axis cs: {cos(55)},{sin(55)});
				\draw (axis cs: {0.6},{2}) to[out=270,in=70] (axis cs: {cos(70)},{sin(70)});
				
				\draw (axis cs: {0},{2})--(axis cs: {cos(90)},{sin(90)});
				
				\draw (axis cs: {-0.6},{2}) to[out=270,in=110] (axis cs: {cos(110)},{sin(110)});
				\draw (axis cs: {-1.2},{2}) to[out=270,in=125]  (axis cs: {cos(125)},{sin(125)});
				
				\draw (axis cs: {-1.95},{1.95})--(axis cs: {cos(135)},{sin(135)});
				
				\draw (axis cs: {-2},{1.2}) to[out=0,in=145]  (axis cs: {cos(145)},{sin(145)});
				\draw (axis cs: {-2},{0.6}) to[out=0,in=160] (axis cs: {cos(160)},{sin(160)});
				
			\end{axis}
		\end{tikzpicture}
	\end{minipage}
	\caption{ {\footnotesize  Illustration of foliation induced by $u:[0,T]\times M\to\mathcal{R}$. \textit{Left}: In blue, the part of $M$, corresponding to $M\bigcap\into(\dom f)$, that is injectively transported; In red, the part of $M$, corresponding to $M\bigcap\bd(\dom f)$, where $u$ fails injectivity. \textit{Right}: The case where $\dom f = \R^d$, and $u$ induces a complete foliation.}\label{fig:Foliation}}
	\end{figure}
	
Recall that for every $m\in M\,\bigcap \,\into(\dom f)$ the set $\Gamma = M\cap B(m,\delta)$ is a  $\mathcal{C}^{1,1}$-submanifold of codimension~1 (see Lemma~\ref{lem-mnfd}) and can naturally be endowed with its Hausdorff measure $\mathcal{H}^{d-1}$. Consequently, we can consider the measure~ $\mu := \mathcal{L}_1\times \mathcal{H}^{d-1}$ over $[0,T]\times \Gamma$, where $\mathcal{L}_1$ is the Lebesgue measure over $[0,T]$. We further endow the set $\mathcal{R} = [f\leq 0] \setminus \into[f\leq T]$ with the usual Lebesgue measure $\mathcal{L}_d$ of $\R^d$.	

\begin{theorem}[control of null sets]\label{thm:CurvesAvoidNullMeasureSets} Let $\mathcal{N} \subset \R^d$ be a null measure set, and assume~\ref{hyp:NonemptyIntArgmin}--\ref{hyp:BoundedInternalCurvature} hold. Let $\bar m\in M\cap\into(\dom f)$ such that $f$ is Lipschitz-continuous near $\bar{m}$. Then, there exist $\varepsilon,\delta>0$ and a subset $A\subset \Gamma := B(\bar{m},\delta)\cap M$ of full measure $($i.e.  $\mathcal{H}^{d-1}(A) = \mathcal{H}^{d-1}(\Gamma))$ such that 
\begin{enumerate}
    \item[$\mathrm{(i)}$] $\Gamma \subset \into(\dom f)$ and $u(\Gamma\times [0,\varepsilon])\subset\into(\dom f)$.
    \item[$\mathrm{(ii)}$] $f$ is Lipschitz on $u(\Gamma\times [0,\varepsilon])$.
     \item[$\mathrm{(iii)}$]  for every $m\in A$, one has
		\[
		\mathcal{L}_1(\{ t\in [0,\varepsilon]\ :\ u(t,m)\in \mathcal{N}  \}) = 0.
		\]	
\end{enumerate}
If $\dom f = \R^d$ and $f$ is locally Lipschitz, then $[0,\varepsilon]\times \Gamma$ can be taken to be $[0,T]\times M$.	
	\end{theorem}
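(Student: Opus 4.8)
The plan is to reduce everything to the bi-Lipschitz property of $u$ furnished by Proposition~\ref{prop:InvertibleSweeping}, combined with a Fubini argument for the product measure $\mu=\mathcal{L}_1\times\mathcal{H}^{d-1}$.

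First I would fix the neighborhood, exactly as in the proof of Proposition~\ref{prop:InvertibleSweeping}. Since $\bar m\in M\cap\into(\dom f)$ and $f$ is Lipschitz near $\bar m$, I choose $\delta>0$ with $B(\bar m,\delta)\subset\into(\dom f)$, so that by Lemma~\ref{lem-mnfd} the set $\Gamma=B(\bar m,\delta)\cap M=[f=\alpha_2]\cap B(\bar m,\delta)$ is a $\mathcal{C}^{1,1}$-hypersurface; then, using continuity of $u$, I shrink $\delta$ and select $\varepsilon>0$ so that $u(\Gamma\times[0,\varepsilon])\subset\into(\dom f)$ and $f$ is Lipschitz on the compact set $u(\Gamma\times[0,\varepsilon])$. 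This already gives assertions (i) and (ii). Proposition~\ref{prop:InvertibleSweeping} then guarantees that $u:[0,\varepsilon]\times\Gamma\to\mathcal{R}$ is bi-Lipschitz onto its image $\mathcal{R}_0:=u([0,\varepsilon]\times\Gamma)$, so that the inverse $u^{-1}:\mathcal{R}_0\to[0,\varepsilon]\times\Gamma$ is Lipschitz.

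The measure-theoretic core is the next step. I regard $[0,\varepsilon]\times\Gamma$ as a $d$-dimensional Lipschitz submanifold of $\R^{d+1}$; since the metric $D$ of \eqref{eq:D} is bi-Lipschitz equivalent to the Euclidean one, the product formula for Hausdorff measures identifies $\mathcal{H}^d\big|_{[0,\varepsilon]\times\Gamma}$ with $\mu$ up to the usual normalizing constant, so $\mathcal{H}^d$-null and $\mu$-null subsets coincide. Now $\mathcal{N}\cap\mathcal{R}_0$ is $\mathcal{L}_d$-null in $\R^d$, hence $\mathcal{H}^d$-null; applying the Lipschitz map $u^{-1}$, which cannot increase $\mathcal{H}^d$-measure (see \cite{EvansGariepy2015Measure}), I conclude that
\[
u^{-1}(\mathcal{N})\cap\big([0,\varepsilon]\times\Gamma\big)=u^{-1}(\mathcal{N}\cap\mathcal{R}_0)
\]
is $\mathcal{H}^d$-null, and therefore $\mu$-null. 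Applying Fubini's theorem to $\mu=\mathcal{L}_1\times\mathcal{H}^{d-1}$ gives
\[
0=\mu\big(u^{-1}(\mathcal{N})\big)=\int_{\Gamma}\mathcal{L}_1\big(\{\,t\in[0,\varepsilon]:u(t,m)\in\mathcal{N}\,\}\big)\,d\mathcal{H}^{d-1}(m).
\]
As the integrand is nonnegative, it vanishes for $\mathcal{H}^{d-1}$-almost every $m\in\Gamma$; taking $A$ to be the set of such $m$ yields a full-measure subset of $\Gamma$ satisfying (iii). For the last assertion, when $\dom f=\R^d$ and $f$ is locally Lipschitz the discussion following Proposition~\ref{prop:InvertibleSweeping} shows that $u$ is a bi-Lipschitz foliation of the entire annulus $\mathcal{R}$, so the argument applies verbatim with $[0,\varepsilon]\times\Gamma$ replaced by $[0,T]\times M$.

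The main obstacle is the precise measure-theoretic bookkeeping in the second step: the identification $\mathcal{H}^d\big|_{[0,\varepsilon]\times\Gamma}=\mu$ and the claim that the bi-Lipschitz $u$ transports $\mathcal{L}_d$-null sets of $\R^d$ into $\mu$-null sets of the product manifold. Both ingredients are standard consequences of the Lipschitz behaviour of Hausdorff measure together with the product formula for $\mathcal{H}^d$ on rectifiable sets, but they are where care is genuinely required; once they are in place the Fubini conclusion is immediate.
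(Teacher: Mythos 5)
Your proof is correct, but it takes a genuinely different route through the measure-theoretic core than the paper does. Both arguments share the same skeleton: the neighborhood $[0,\varepsilon]\times\Gamma$ and assertions (i)--(ii) come straight from the proof of Proposition~\ref{prop:InvertibleSweeping}, and the final step is the identical Fubini decomposition of $\mu(u^{-1}(\mathcal{N}))$ over $\Gamma$. Where you diverge is in showing $\mu(u^{-1}(\mathcal{N}))=0$: the paper works \emph{forward} through $u$, applying the area/co-area formula of \cite{EvansGariepy2015Measure} to get $\int_{u^{-1}(\mathcal{N})} Ju\,d\mu = \mathcal{L}_d(\mathcal{N}) = 0$ and then exploiting the bi-Lipschitz property only through the Jacobian lower bound $Ju\geq c>0$ $\mu$-a.e.; you instead work \emph{backward} through $u^{-1}$, using the elementary fact that a Lipschitz map cannot increase $\mathcal{H}^d$-measure, so that $u^{-1}(\mathcal{N}\cap\mathcal{R}_0)$ is $\mathcal{H}^d$-null, and then identifying $\mathcal{H}^d$-null with $\mu$-null sets on the product. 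Your route buys a more elementary argument — no Jacobian, no area formula, just the metric behaviour of Hausdorff measure under Lipschitz maps — at the cost of the identification $\mathcal{H}^d\big|_{[0,\varepsilon]\times\Gamma}\sim\mu$, which you rightly flag as the delicate point; it is indeed standard here, since $\Gamma$ is a $\mathcal{C}^{1,1}$-hypersurface (Lemma~\ref{lem-mnfd}) and hence admits bi-Lipschitz charts from open subsets of $\R^{d-1}$, under which both measures pull back to (mutually absolutely continuous with) $\mathcal{L}_d$ on $[0,\varepsilon]\times\R^{d-1}$. It is worth noting that the paper's application of the co-area formula on the domain $[0,\varepsilon]\times\Gamma$ — which is not an open subset of $\R^d$ — implicitly requires the same chart-based bookkeeping to make sense of $Du$ and of integration against $\mu$, so your version, if anything, makes the needed rectifiability input more explicit. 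One small gloss common to both proofs: to apply Fubini you should replace $\mathcal{N}$ by a Borel null superset so that $u^{-1}(\mathcal{N})$ is measurable for the product measure; this loses nothing since the conclusion (iii) for the superset implies it for $\mathcal{N}$.
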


\begin{proof}
Let $\varepsilon>0$ such that $[0,\varepsilon]\times\Gamma$ is the neighborhood that appears in the proof of Proposition~\ref{prop:InvertibleSweeping} so that assertions (i), (ii) hold. Without loss of generality, let us assume that $\mathcal{N}\subset \mathcal{O} := u([0,\varepsilon]\times\Gamma)$. Note first that $\mu=\mathcal{L}_1\times \mathcal{H}^{d-1}$ is a Borel measure over $[0,\varepsilon]\times \Gamma$. This yields that $u:[0,\varepsilon]\times \Gamma\to \mathcal{R}$ is measurable, and therefore so is $\ind_{\mathcal{N}}\circ u$. Furthermore, since  $\ind_{\mathcal{N}}\circ u$ is integrable, we can apply Fubini's theorem (see, e.g., \cite[Theorem~1.22]{EvansGariepy2015Measure}) to get that the mapping $\gamma\in\Gamma\mapsto \int_{0}^{\varepsilon}\ind_{\mathcal{N}}(u(t,\gamma)) dt$ is $\mathcal{H}^{d-1}$-measurable and that
		\[
	\mu(u^{-1}(\mathcal{N}))) \equiv \int_{u^{-1}(\mathcal{N})} 1d\mu =\int_{[0,\varepsilon]\times \Gamma} \ind_{\mathcal{N}}(u(t,\gamma))d\mu(t,\gamma) = \int_{\Gamma} \int_{0}^{\varepsilon}\ind_{\mathcal{N}}(u(t,\gamma)) dt\,d\mathcal{H}^{d-1}(\gamma). 
		\]
Since $u$ is Lipschitz-continuous, its Jacobian $Ju(t,\gamma)=|\mathrm{det}(Du(t, \gamma))|$ is well-defined  $\mu$--a.e. in $[0,\varepsilon]\times \Gamma$. Thus, we can apply the co-area formula (see, e.g., \cite[Theorem 3.10]{EvansGariepy2015Measure}) to write
		\[
		\int_{u^{-1}(\mathcal{N})} Ju(t,\gamma)\, d\mu(t,\gamma) = \int_{\mathcal{O}} \underbrace{\mathcal{H}^0\left(u^{-1}(\mathcal{N})\bigcap u^{-1}(x)\right)}_{\in \{0,1 \}}dx = \int_{\mathcal{N}} dx = \mathcal{L}_d(\mathcal{N}) =0 , 
		\]
where the last equality comes from the fact that $u$ is a bijection between $[0,\varepsilon]\times \Gamma$ and $\mathcal{O}$ and consequently, $\mathcal{H}^0\left(u^{-1}(\mathcal{N})\bigcap u^{-1}(x)\right) =\ind_{\mathcal{N}}(x)$, for all $x\in\mathcal{O}$. Finally, since $u$ is bi-Lipschitz, there exists a constant $c>0$ such that $Ju(t,\gamma)\geq c$ for $\mu$-almost every $(t,\gamma)\in [0,\varepsilon]\times \Gamma$. Thus,
		\begin{align*}
			c\int_{\Gamma}\int_{0}^{\varepsilon}\ind_{\mathcal{N}}(u(t,\gamma))\, dt\,\,d\mathcal{H}^{d-1}(\gamma)& = \, \int_{u^{-1}(\mathcal{N})} cd\mu 
			\, \leq  \,	\int_{u^{-1}(\mathcal{N})} Ju(t,\gamma)\mu(t,\gamma)
			= \int_{\mathcal{N}}dx = 0.
		\end{align*}
		Then, the mapping $\gamma\mapsto \int_{0}^{\varepsilon}\ind_{\mathcal{N}}(u(t,\gamma)) dt$ is zero $\mathcal{H}^{d-1}$-almost everywhere in $\Gamma$, and so
		\[
		\mathcal{L}_1(\{ t\in [0,\varepsilon]\ :\ u(t,\gamma)\in \mathcal{N}  \}) = 0, \quad\mbox{ for  $\mathcal{H}^{d-1}$--\textit{ a.e.\,}} \gamma\in \Gamma.
		\]
		The proof is complete.
\end{proof}
\smallskip

Combining Proposition~\ref{prop:InvertibleSweeping} with Theorem~\ref{thm:CurvesAvoidNullMeasureSets}, we will show that, for locally Lipschitz functions, the mapping $u:[0,T]\times M\to\mathcal{R}$ induces steepest descent curves almost everywhere. Indeed, for the special case where $\dom f = \R^d$ the argument goes as follows: let $\mathcal{N}$ be the set of non-differentiability points of $f$. Then by Rademacher theorem $\mathcal{L}_{d}(\mathcal{N})=0$. For every $m\in A$ (the full measure set given by Theorem~\ref{thm:CurvesAvoidNullMeasureSets}) the set
\[
I_m\:= \{ t\in [0,T]\ :\ u(\cdot,m) \text{ is differentiable at }t\text{ and }f\text{ is differentiable at } u(t,m) \}
\]
must be of full measure. Applying chain rule at every point $t\in I_m$  we deduce:
	\[
	-1 = (f\circ u(\cdot,m))^{\prime}(t) = \slope{f}(u(t,m))u^{\prime}(t,m).
	\]
	Thus, $u^{\prime}(t,m) = - \slope{f}(u(t,m))^{-1}$ for almost every $t\in[0,T]$, yielding that $u(\cdot,m)$ is a steepest descent curve. The following theorem deals with the general case. The proof follows the same idea together with a localization argument.
	\begin{theorem}[Existence of steepest descent curves]\label{thm:SteepestDescentAlmostEverywhere} Let $f:\R^d\to\R\cup\{+\infty\}$ be a lower semicontinuous quasiconvex, which is locally Lipschitz on an open set $\mathcal{O}\subset \dom f$. Assume further that $f$ satisfies~\ref{hyp:NonemptyIntArgmin}--\ref{hyp:BoundedInternalCurvature}. Then, for almost every $x\in \mathcal{O}$, the function $f$ admits a steepest descent curve emanating from $x$.
	\end{theorem}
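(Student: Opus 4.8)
The plan is to globalize the local bi-Lipschitz picture of Proposition~\ref{prop:InvertibleSweeping} and Theorem~\ref{thm:CurvesAvoidNullMeasureSets} by covering $\mathcal{O}$ with countably many downward sweeping charts, each of which already carries steepest descent curves through almost all of its points. Set $\mathcal{N}$ to be the set of points of $\mathcal{O}$ at which $f$ fails to be differentiable; since $f$ is locally Lipschitz on $\mathcal{O}$, Rademacher's theorem gives $\mathcal{L}_d(\mathcal{N})=0$. The key local observation is that any Lipschitz sweeping trajectory $u(\cdot,m)$ that meets $\mathcal{N}$ on a time-null set is a \emph{genuine} steepest descent curve: being Lipschitz, $u(\cdot,m)$ is differentiable a.e., and at any $t$ where both $u(\cdot,m)$ is differentiable and $f$ is differentiable at $u(t,m)$, the chain rule together with Proposition~\ref{prop:UniqueSolutionSweeping}(iii) gives $-1=\langle\nabla f(u(t,m)),\dot u(t,m)\rangle$. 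Since $\dot u(t,m)\in -N(\widehat{\mathcal{S}}(t);u(t,m))=-\R_+\hat n(u(t,m))$ and, at a differentiability point, $\nabla f(u(t,m))=\slope{f}(u(t,m))\,\hat n(u(t,m))$ with $\slope{f}>0$ by~\ref{hyp:BoundedSlope}, we obtain $\|\dot u(t,m)\|=\slope{f}(u(t,m))^{-1}$ for a.e.\ $t$. This is exactly the lower (sharp) bound of the near-maximal slope sandwich in Proposition~\ref{prop:UniqueSolutionSweeping}(iv); equivalently, after arc-length reparametrization $u(\cdot,m)$ solves~\eqref{eq:Steepest-Slope}. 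Every forward sub-arc of such a curve is again a steepest descent curve.

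Next I localize. Fix a non-minimizer $x_0\in\mathcal{O}$ with $f(x_0)<\sup f$; the excluded set $[f=\sup f]\cap\into(\dom f)$ lies on the boundary of a convex sublevel set, hence is $\mathcal{L}_d$-null, while every minimizer carries the constant steepest descent curve because $\slope{f}\equiv 0$ on $\argmin f$. I follow the reverse sweeping trajectory through $x_0$ up to a level $\alpha_2=f(x_0)+h$ with $h>0$ small, reaching a base point $\bar m\in\bd([f\le\alpha_2])$ with $\bar m\to x_0$ as $h\downarrow 0$. Choosing $h$ small enough that $\bar m$ lies in a neighborhood of $x_0$ on which $f$ is Lipschitz and which is contained in $\mathcal{O}$, Proposition~\ref{prop:InvertibleSweeping} applies at $\bar m$ and yields a bi-Lipschitz chart $u:[0,\varepsilon]\times\Gamma\to W:=u([0,\varepsilon]\times\Gamma)\subset\mathcal{O}$, where $\Gamma=B(\bar m,\delta)\cap\bd([f\le\alpha_2])$. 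Because the only constraint on $\varepsilon$ is that $W$ stay inside this fixed Lipschitz neighborhood of $x_0$ and the trajectories are $K$-Lipschitz in time, $\varepsilon$ may be bounded below independently of $h$; taking $h<\varepsilon$ places $x_0=u(h,\bar m)$ at an interior time over the relative interior of $\Gamma$, so by invariance of domain $x_0\in\into(W)$. Applying Theorem~\ref{thm:CurvesAvoidNullMeasureSets} to this chart and to $\mathcal{N}$ produces a full-measure set $A\subset\Gamma$ such that each $u(\cdot,m)$, $m\in A$, meets $\mathcal{N}$ on a time-null set; by the previous paragraph every point of $u([0,\varepsilon)\times A)$ admits a steepest descent curve, namely the forward tail of the corresponding trajectory. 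Since $([0,\varepsilon]\times\Gamma)\setminus([0,\varepsilon)\times A)$ is $\mu$-null and $u$ is bi-Lipschitz, the area formula forces $W\setminus u([0,\varepsilon)\times A)$ to be $\mathcal{L}_d$-null; that is, almost every point of $W$ admits a steepest descent curve.

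Finally I globalize by a countable subcover. The open sets $\{\into(W_{x_0})\}$ constructed above cover $\mathcal{O}\setminus\big(\argmin f\cup[f=\sup f]\big)$, and since $\R^d$ is second countable (Lindel\"of) there is a countable subfamily $\{\into(W_k)\}_k$ with the same union. Writing $G_k$ for the full-measure good subset of $W_k$, we obtain
\[
\{x\in\mathcal{O}:\ f\text{ admits no steepest descent curve from }x\}\ \subseteq\ \big([f=\sup f]\cap\into(\dom f)\big)\cup\bigcup_k\big(W_k\setminus G_k\big),
\]
and each set on the right is $\mathcal{L}_d$-null, so the left-hand side is null, which is the claim. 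The main obstacle is precisely this globalization: the sweeping charts propagate only \emph{downward} in $f$, so realizing an arbitrary point as an interior point of a chart forces the upward-flow construction together with the uniform lower bound on $\varepsilon$ as $\bar m\to x_0$. The remaining ingredients—transferring null sets through the bi-Lipschitz charts via the area formula and collapsing the near-steepest sandwich to an equality through differentiability of $f$—are routine once Theorem~\ref{thm:CurvesAvoidNullMeasureSets} is in hand.
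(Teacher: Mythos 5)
Your proposal is correct in substance, but it globalizes in a genuinely different way from the paper. The paper never needs to realize an arbitrary point as an \emph{interior} point of a sweeping chart: its steepest descent curves emanate directly from points of the level sets (the points $\gamma\in A_m\subset f^{-1}(t)$, at time $0$ of the chart given by Theorem~\ref{thm:CurvesAvoidNullMeasureSets}), and it converts ``$\mathcal{H}^{d-1}$-a.e.\ point of every level set'' into ``$\mathcal{L}_d$-a.e.\ point of $\mathcal{O}$'' by slicing: it exhausts $\mathcal{O}$ by compacta $K_{\alpha,n,j}$, takes a Lipschitz extension $h_{\alpha,n,j}$ of $f$, and applies the co-area formula together with the slope lower bound $c$ from \ref{hyp:BoundedSlope} to get $\mathcal{L}_d(\widetilde{\mathcal{N}}_{\alpha,n,j})\leq c^{-1}\int \mathcal{H}^{d-1}(\widetilde{\mathcal{N}}_{\alpha,n,j}\cap f^{-1}(t))\,dt=0$. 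You instead center a flow box at each point by ascending through the reverse process $\widehat{\mathcal{U}}$ to a base point $\bar m$ at level $f(x_0)+h$, argue that the chart depth $\varepsilon$ is uniform in $h$, place $x_0=u(h,\bar m)$ in the interior by invariance of domain, push the $\mu$-null bad set through the bi-Lipschitz map $u$, and finish with a Lindel\"{o}f cover. Both routes work, but two steps you leave implicit deserve to be made explicit, and both are available from the paper's own machinery: (1) the identity $u(h,\bar m)=x_0$ requires knowing that the time-reversal of the ascending $\widehat{\mathcal{U}}$-trajectory solves the forward $\widehat{\mathcal{S}}$-process (via $N(\widehat{\mathcal{U}};y)=-N(\widehat{\mathcal{S}};y)$ on the common boundary, as in the proof of Lemma~\ref{lem-mnfd}) together with uniqueness of the forward solution --- this is precisely the reversal used in the injectivity part of Proposition~\ref{prop:InvertibleSweeping}; you should also note that $x_0\in\widehat{\mathcal{U}}(-h)$, i.e.\ $x_0\in\bd[f\leq f(x_0)]$, which follows from \ref{hyp:BoundedSlope} as in the remark after the hypotheses; (2) your one-line claim that $[f=\sup f]\cap\into(\dom f)$ ``lies on the boundary of a convex sublevel set'' is not literally true; the correct argument (which the paper gives at the end of its proof) is that \ref{hyp:BoundedSlope} forces every level set other than $\argmin f$ to have empty interior, and an empty-interior level set of a quasiconvex function is contained in $\bd[f\leq t]\cup\bd[f<t]$, hence is null. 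What each approach buys: yours yields the stronger geometric picture that almost every point is interior to a bi-Lipschitz flow box, and it avoids the Lipschitz extension and co-area bookkeeping; the paper's slicing argument avoids the upward flow and the uniform-in-$h$ chart-depth estimate entirely.
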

	
	
	\begin{proof}
Without loss of generality, we may assume that $\mathcal{O}$ is convex and $0\in \mathcal{O}$. Let  $\widetilde{\mathcal{N}}$ be the set of all $x\in \mathcal{O}$ for which $f$ does not admit a steepest descent curve emanating from $x$. (Notice that $\widetilde{\mathcal{N}}\cap \argmin f$ is trivially empty.) \smallskip\newline
Let further $\mathcal{N}$ be the set of all $x\in \mathcal{O}$ for which $f$ is not differentiable at~$x$. By Rademacher's theorem $\mathcal{L}_d({\mathcal{N}})=0$. We shall show that the set $\widetilde{\mathcal{N}}$ is also null.
  
To this end, for every $j\in\N$, let us define
\[
B_j = \left(1-\tfrac{1}{j}\right) \overline{\mathcal{O}}.
\]
Clearly, $B_j \subset\mathcal{O}$ and $\mathcal{O} = \bigcup_{j\in\N} B_j$. Now, fix $\alpha \in (\inf_{\mathcal{O}} f,\sup_{\mathcal{O}} f)$ and set
		\begin{align*}
			K_{\alpha,n,j} &= (B_j\cap [f\leq \alpha])\setminus\into([f\leq \inf_{\mathcal{O}} f+\tfrac{1}{n}]); \text{ and}\\
			\widetilde{\mathcal{N}}_{\alpha,n,j} &= \widetilde{\mathcal{N}}\cap K_{\alpha,n,j}
		\end{align*}
Notice that $K_{\alpha,n,j}$ is compact and does not intersect $\argmin f$. Consequently, by  hypothesis~\ref{hyp:BoundedSlope} we deduce that there exists $c>0$ such that  
$\|\nabla f(x) \|>c$ for every $x\in K_{\alpha,n,j}\setminus \mathcal{N}$.\smallskip\newline 
Denote by $h_{\alpha,n,j}:\R^d\to \R$ any Lipschitz extension of $f$ from $K_{\alpha,n,j}$ to $\R^d$. Applying the co-area formula, we deduce
		\[
		\int_{-\infty}^{\infty} \mathcal{H}^{d-1}(\widetilde{\mathcal{N}}_{\alpha,n,j}\cap h_{\alpha,n,j}^{-1}(t))dt = \int_{\widetilde{\mathcal{N}}_{\alpha,n,j}} \|\nabla h_{\alpha,n,j}(x)\| dx = \int_{\widetilde{\mathcal{N}}_{\alpha,n,j}} \|\nabla f(x)\|dx\,\geq c\,\mathcal{L}_d(\widetilde{\mathcal{N}}_{\alpha,n,j}).
		\]
Therefore
\begin{align*}
\mathcal{L}_d(\widetilde{\mathcal{N}}_{\alpha,n,j}) \, \leq \, \frac{1}{c}\int_{-\infty}^{\infty} \mathcal{H}^{d-1}(\widetilde{\mathcal{N}}_{\alpha,n,j}\cap h_{\alpha,n,j}^{-1}(t))\,dt\, = \, \frac{1}{c}\int_{\inf_{\mathcal{O}} f +\tfrac{1}{n}}^{\alpha} \mathcal{H}^{d-1}(\widetilde{\mathcal{N}}_{\alpha,n,j}\cap f^{-1}(t))\,dt.    
\end{align*}

Let us fix $t\in [\inf_{\mathcal{O}} f + 1/n,\alpha]$, set $M=f^{-1}(t)$ and choose $m\in f^{-1}(t)\bigcap \mathcal{O}$. 
There exist $\delta >0$ and $\varepsilon>0$ such that $\Gamma_{m}:=M\cap B(m, \delta) \subset\mathrm{int}\dom f$ and 
$u([0,\varepsilon]\times\Gamma_m)\subset\mathcal{O}$. Let further $A_m\subset \Gamma_m$ be the full measure subset given by Theorem~\ref{thm:CurvesAvoidNullMeasureSets} for $\Gamma=\Gamma_m$ and the null measure set 
$\mathcal{N}$. Then, for every $\gamma\in A_m$ and almost every $t\in [0,\varepsilon]$, $u(\cdot,\gamma)$ is differentiable at $t$ and $f$ is differentiable at $u(t,\gamma)$.
\medskip

Recall that the boundary of the set $S:=[f\leq f(u(t,\gamma))]$ is a $\mathcal{C}^{1,1}$-manifold around $u(t,\gamma)$ for every $t\in[0,\varepsilon]$ (cf. Lemma~\ref{lem-mnfd}). We deduce that $N(S,u(t,\gamma)) = \R_+\{\nabla f(u(t,\gamma))\}$ whenever $f$ is differentiable at $u(t,\gamma)$. Then, for almost all $t\in[0,\varepsilon]$, we have that $\frac{d}{dt}u(y,\gamma) \in \R_+\{\nabla f(u(t,\gamma))\}$. Using this fact and Proposition~\ref{prop:UniqueSolutionSweeping}, we can apply chain rule to deduce that for almost all $t\in [0,\varepsilon]$
		\[
		1=-(f\circ u(\cdot,\gamma))'(t) = \left\|\frac{d}{dt}u(t,\gamma)\right\|\|\nabla f(u(t,\gamma))\| = \left\|\frac{d}{dt}u(t,\gamma)\right\|\slope{f}(u(t,\gamma)).
		\]
We conclude that for every $\gamma\in A_m$, $u(\cdot,\gamma)$ is a steepest descent curve of $f$ emanating from $\gamma$, and so $\widetilde{\mathcal{N}}_{\alpha,n,j}\cap A_m = \emptyset$. Since $M\cap \mathcal{O}=f^{-1}(t)\cap\mathcal{O}$ is $\sigma$-compact, it can be covered by countably many sets $\{\Gamma_{m_k}\ :\ k\in \N\}$, yielding  
		\[
		\mathcal{H}^{d-1}(\widetilde{\mathcal{N}}_{\alpha,n,j}\cap f^{-1}(t)) \leq \mathcal{H}^{d-1}(f^{-1}(t)\setminus \bigcup A_{m_k}) = 0.
		\]
		
		Since the latter conclusion holds for every $t\in [\inf_{\mathcal{O}} f + 1/n,\alpha]$, we deduce that $\mathcal{L}_d(\widetilde{\mathcal{N}}_{\alpha,n,j}) = 0$. Taking $n\to\infty$, $j\to\infty$ and $\alpha\nearrow \sup_{\mathcal{O}} f$, we deduce that
		\[
		\mathcal{L}_d(\widetilde{\mathcal{N}}\setminus (\argmin_{\mathcal{O}} f\cup \argmax_\mathcal{O} f) ) = 0.
		\]
		
Note that~\ref{hyp:BoundedSlope} yields that all level sets of $f$ (except possibly $\argmin f$) must have empty interior. Thus, $\argmax_{\mathcal{O}} f = [f = \sup_{\mathcal{O}} f]\cap \mathcal{O}$ has null measure.
        Moreover, if $\inf_{\mathcal{O}} f >\min f$, then $\argmin_{\mathcal{O}} f$ also has null measure, while if 
$\inf_{\mathcal{O}} f =\min f$, then $\widetilde{\mathcal{N}}\cap \argmin_{\mathcal{O}} f=\emptyset$.  Thus,
        \[
        \mathcal{L}_d(\widetilde{\mathcal{N}}) = \mathcal{L}_d(\widetilde{\mathcal{N}}\setminus (\argmin_{\mathcal{O}} f\cup \argmax_\mathcal{O} f) ) + \mathcal{L}_d(\widetilde{\mathcal{N}}\cap \argmax_{\mathcal{O}} f) + \mathcal{L}_d(\widetilde{\mathcal{N}}\cap \argmin_{\mathcal{O}} f) = 0.
        \]
   The proof is complete.
	\end{proof}

\section{Regularizing locally Lipschitz quasiconvex}\label{sec:main}

In order to apply the results of Section~\ref{sec:Reversible}, we present a regularization scheme based on the max-convolution operator (see, e.g., \cite{SeegerVolle1995}). Given  two functions $f,g:\R^d\to \overline{\R}$  the max-convolution (or sublevel-convolution) of $f$ and $g$, denoted by $f\diamond g$, is defined as
\begin{equation}\label{eq:Def-MaxConvolution}
	(f\diamond g)(x) := \inf_{w\in\R^d} \max\{ f(x-w),g(w) \}.
\end{equation}
Notice that whenever the infimum of \eqref{eq:Def-MaxConvolution} is exact, we have
\begin{equation}\label{eq:SumSublevel-MaxConvolution}
	[f\diamond g\leq \alpha] = [f\leq \alpha] + [g\leq \alpha]\quad\mbox{ and }\quad [f\diamond g< \alpha] = [f< \alpha] + [g< \alpha],
\end{equation}
for every $\alpha\in \R$. \smallskip\newline
In what follows we simply denote by $\Ball\equiv\Ball_d$ the closed unit ball of $\mathbb{R}^d$. Let us assume $\inf f = 0$. Let $\varepsilon>0$ and let us denote by $I_{\varepsilon\Ball}$ the indicator function of~$\varepsilon\Ball$, that is, 
\[
I_{\varepsilon\Ball}(x)=  \begin{cases}  \phantom{+}0, &\text{ if } x \in \varepsilon\Ball \\ 
+\infty, &\text{ if } x \notin \varepsilon\Ball .
\end{cases}
\]
We focus on a particular max-convolution with $g\equiv I_{\varepsilon\Ball}$, namely, we study the function $f_{\varepsilon}= f\diamond I_{\varepsilon\Ball}$ defined by
\begin{equation}\label{eq:Charac-MaxConvWithIndicator}
	f_{\varepsilon}(x) = (f\diamond I_{\varepsilon\Ball})(x) = \inf_{w\in \Ball} f\left (x - \varepsilon w\right ),\qquad \forall x\in \R^d.
\end{equation}
If $\inf f>-\infty$, we can easily adapt the definition of $f_{\varepsilon}$ bysetting $$f_{\varepsilon} = \inf f + (f-\inf f)\diamond I_{\varepsilon\Ball}.$$ In both cases, the formulae of the sublevel sets is preserved: 
$$[f_{\varepsilon}\leq \alpha] = [f\leq \alpha]+\varepsilon\Ball,\quad \text{ for every } \alpha\in\R.$$  

However, if $f$ is not bounded from below, the max-convolution loses that key property. Thus, for the general case, we consider the following definition.
\begin{definition}\label{def:f-epsilon} For a lower semicontinuous function $f:\R^d\to\R\cup\{+\infty\}$ and $\varepsilon>0$ we define the regularized function $f_{\varepsilon}:\R^d\to\R\cup\{+\infty\}$ as follows:	
\[
	f_{\varepsilon}(x) = \inf_{w\in\Ball} f(x-\varepsilon w).
\]
Notice that $f_{\varepsilon}$ the unique function satisfying that $[f_{\varepsilon}\leq \alpha] = [f\leq \alpha]+\varepsilon\Ball$, for every $\alpha\in\R$.
\end{definition}

In this section we will study the class
\begin{equation}\label{eq:ClassQ}
		\mathcal{Q}  = \{ f:\R^d\to\R \mid f \text{ is quasiconvex and locally Lipschitz} \}.
	\end{equation}
Let us first focus our attention on quasiconvex functions satisfying $\min f = 0$. The general case will be treated in forthcoming Theorem~\ref{thm:LocalAlmostEveryDescent}. The next proposition surveys some relevant properties of the above max-convolution that we will use in the subsequent development.

\begin{proposition}\label{prop:PropertiesMaxConv} Assume that $f:\R^d\to\R\cup\{+\infty\}$ is lower semicontinuous, quasiconvex and $\min f =0$. Let $\varepsilon >0$ and consider the max-convolution  $f_{\varepsilon} = f\diamond I_{\varepsilon\Ball}$. Then, the following properties hold:
	\begin{enumerate}[label=(\roman*),ref=\roman*]
		\item\label{propMaxConv-SmoothBoundary} $[f_{\varepsilon}\leq \alpha] = [f\leq \alpha] + \varepsilon\Ball$, for every $\alpha\geq 0$; therefore $\bd [f_{\varepsilon}\leq\alpha]$ is an $\varepsilon$-prox-regular $\mathcal{C}^{1,1}$-submanifold.\\
		\item \label{propMaxConv-SumEps} For $\varepsilon_1,\varepsilon_2>0$ such that $\varepsilon_1+\varepsilon_2=\varepsilon$, one has that
		\[
		f_{\varepsilon} = f_{\varepsilon_1}\diamond I_{\varepsilon_2\Ball} = f_{\varepsilon_2}\diamond I_{\varepsilon_1\Ball}.
		\]
		
		\item\label{propMaxConv-IneqSlope} For each $x\in \R^d$, $f_{\varepsilon}(x) = f(z)$, where $z= \proj(x; [f\leq f_{\varepsilon}(x)])$. Moreover, one has that
		\[
		\slope{f_{\varepsilon}}(x) \geq \slope{f}(z).
		\] 
        \item \label{propMaxConv-Lipschitz} If $f$ is finite-valued and locally Lipschitz-continuous, then $f_{\varepsilon}$ is also locally Lipschitz-con\-ti\-nuous.\\
	\end{enumerate}	
\end{proposition}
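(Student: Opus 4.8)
The plan is to prove each of the four properties \eqref{propMaxConv-SmoothBoundary}--\eqref{propMaxConv-Lipschitz} in turn, since they are largely independent, with the understanding that (i) is the structural backbone on which the others lean.

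\textbf{Property \eqref{propMaxConv-SmoothBoundary}.} First I would establish the sublevel set identity. The key point is that when $\min f = 0$, the sublevel sets $[f\le\alpha]$ are nonempty for $\alpha\ge 0$, compact (after invoking coercivity, or at least closed by lower semicontinuity), and the infimum defining $f_\varepsilon$ is \emph{attained} because $\varepsilon\Ball$ is compact and $f$ is lsc. Attainment of the infimum is exactly what makes \eqref{eq:SumSublevel-MaxConvolution} hold with equality, giving $[f_\varepsilon\le\alpha]=[f\le\alpha]+\varepsilon\Ball$. Once this Minkowski-sum formula is in hand, I would observe that $[f\le\alpha]$ is convex (quasiconvexity of $f$), so $[f\le\alpha]+\varepsilon\Ball$ is a convex set obtained by adding a ball of radius $\varepsilon$; such a set is automatically $\varepsilon$-prox-regular, and its boundary is a $\mathcal{C}^{1,1}$-submanifold. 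I would cite Lemma~\ref{lem-mnfd} or the standard fact that the $\varepsilon$-enlargement of a convex body has $\mathcal{C}^{1,1}$ boundary with outer normals realized by the projection onto the inner convex set.

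\textbf{Property \eqref{propMaxConv-SumEps} (semigroup/associativity).} This is a direct computation using associativity of max-convolution together with $I_{\varepsilon_1\Ball}\diamond I_{\varepsilon_2\Ball}=I_{(\varepsilon_1+\varepsilon_2)\Ball}$, which reflects $\varepsilon_1\Ball+\varepsilon_2\Ball=(\varepsilon_1+\varepsilon_2)\Ball$. The cleanest route is via the sublevel-set characterization: both sides have the same sublevel sets $[f\le\alpha]+\varepsilon_1\Ball+\varepsilon_2\Ball$ for every $\alpha$, and by Definition~\ref{def:f-epsilon} a function is uniquely determined by its sublevel sets, so the two functions coincide.

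\textbf{Property \eqref{propMaxConv-IneqSlope}.} Fix $x$ and let $\alpha=f_\varepsilon(x)$. From \eqref{propMaxConv-SmoothBoundary}, $x\in\bd([f\le\alpha]+\varepsilon\Ball)$, and the nearest point $z=\proj(x;[f\le\alpha])$ satisfies $\|x-z\|=\varepsilon$ and $f(z)=\alpha$ by attainment, giving $f_\varepsilon(x)=f(z)$. For the slope inequality I would compare descent directions: moving from $x$ toward lower sublevel sets of $f_\varepsilon$ corresponds, via the projection, to moving from $z$ toward lower sublevel sets of $f$, and the geometry of parallel enlarged convex bodies shows the descent rate cannot decrease. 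Concretely, for $y$ near $x$ with projection $z'=\proj(y;[f\le f_\varepsilon(y)])$, I would estimate $(f_\varepsilon(x)-f_\varepsilon(y))^+/\|x-y\|$ from below by the corresponding ratio at $z,z'$, using that the projection onto a convex set is $1$-Lipschitz (so $\|z-z'\|\le\|x-y\|$) while the function values agree: $f_\varepsilon(x)-f_\varepsilon(y)=f(z)-f(z')$. Taking $\limsup$ as $y\to x$ then yields $\slope{f_\varepsilon}(x)\ge\slope{f}(z)$. \emph{This is the step I expect to be the main obstacle}, because controlling the ratio requires care about which points $y$ realize the limsup and ensuring the projected points $z'$ approach $z$ in a way compatible with the slope of $f$ at $z$; the inequality direction ($\ge$, not equality) is what the convex geometry affords.

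\textbf{Property \eqref{propMaxConv-Lipschitz}.} Local Lipschitz continuity follows from the infimal definition $f_\varepsilon(x)=\inf_{w\in\Ball}f(x-\varepsilon w)$: on any bounded region, $x\mapsto f(x-\varepsilon w)$ is uniformly Lipschitz in $x$ (with constant the local Lipschitz constant of $f$ on a slightly enlarged compact set, uniformly over $w\in\Ball$), and a pointwise infimum of a uniformly Lipschitz family is Lipschitz with the same constant. I would spell out the standard two-line estimate $|f_\varepsilon(x)-f_\varepsilon(x')|\le L\|x-x'\|$ obtained by swapping the roles of $x$ and $x'$ in the infimum.
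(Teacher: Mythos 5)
Parts (i), (ii) and (iv) of your plan are correct and essentially coincide with the paper's proof: exactness of the infimum (compactness of $\varepsilon\Ball$ plus lower semicontinuity) yields the Minkowski-sum formula, the prox-regularity/$\mathcal{C}^{1,1}$ claim is the standard outer-parallel-body fact (the paper phrases it through the $\mathcal{C}^{1,1}$ regularity of $d(\cdot,[f\leq\alpha])$ off the convex set, with $\bd[f_{\varepsilon}\leq\alpha]$ the level set $[d(\cdot,[f\leq\alpha])=\varepsilon]$), your sublevel-set route to (ii) is a harmless variant of the paper's nested-infimum computation, and (iv) is exactly the paper's two-line estimate with a common local Lipschitz constant on an enlarged ball. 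One small repair in the first half of (iii): your claims $\|x-z\|=\varepsilon$ and $x\in\bd([f\leq\alpha]+\varepsilon\Ball)$ can fail (plateaus of $f$ at level $\alpha$, or $x\in\argmin f_{\varepsilon}$); what is true, and suffices, is the two-sided argument: $f(z)\leq f_{\varepsilon}(x)$ because $z\in[f\leq f_{\varepsilon}(x)]$, and $f_{\varepsilon}(x)\leq f(z)$ because exactness gives $d(x,[f\leq f_{\varepsilon}(x)])\leq\varepsilon$, hence $\|x-z\|\leq\varepsilon$.

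The genuine gap is the slope inequality in (iii), precisely at the step you flagged. Your comparison via $z'=\proj(y;[f\leq f_{\varepsilon}(y)])$ fails twice. First, $\|z-z'\|\leq\|x-y\|$ does not follow from nonexpansiveness of projections: $z$ and $z'$ are projections onto \emph{different} convex sets $[f\leq f_{\varepsilon}(x)]$ and $[f\leq f_{\varepsilon}(y)]$, and the joint map $y\mapsto\proj(y;[f\leq f_{\varepsilon}(y)])$ is precisely the reversed sweeping flow of Section~\ref{sec:Reversible}, which expands distances and is controlled only under~\ref{hyp:BoundedInternalCurvature} (with constant $>1$), a hypothesis the proposition does not assume. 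Second, even granting that bound, since $f_{\varepsilon}(x)-f_{\varepsilon}(y)=f(z)-f(z')$ you would obtain
\[
\frac{(f_{\varepsilon}(x)-f_{\varepsilon}(y))^{+}}{\|x-y\|}\,\leq\,\frac{(f(z)-f(z'))^{+}}{\|z-z'\|},
\]
an \emph{upper} bound on the difference quotient at $x$ — the wrong direction for $\slope{f_{\varepsilon}}(x)\geq\slope{f}(z)$, which requires manufacturing, from sequences realizing $\slope{f}(z)$, test points near $x$ where $f_{\varepsilon}$ drops at least as fast (and your projected points $z'$ need not sweep out such sequences). The paper's fix is a \emph{translation}, not a projection: set $w:=x-z$, so $\|w\|\leq\varepsilon$, whence $f_{\varepsilon}(y+w)\leq f(y)$ for every $y$, while $d(x,y+w)=d(z,y)$ exactly. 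For $y\to z$ inside $[f\leq f(z)]$ (which computes $\slope{f}(z)$, other points contributing $0$ to the positive part),
\[
\frac{f(z)-f(y)}{d(z,y)}=\frac{f_{\varepsilon}(x)-f(y)}{d(x,y+w)}\leq\frac{f_{\varepsilon}(x)-f_{\varepsilon}(y+w)}{d(x,y+w)},
\]
and since $y+w\to x$ with $y+w\in[f_{\varepsilon}\leq f_{\varepsilon}(x)]$, the limsup of the right-hand side is at most $\slope{f_{\varepsilon}}(x)$. This one-line substitution is the idea missing from your argument.
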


\begin{proof}
    Assertion $(i)$ follows from the fact that the infimum in~\eqref{eq:Charac-MaxConvWithIndicator} is exact and by the 
$\mathcal{C}^{1,1}$-smoothness of the distance function $d(\cdot,[f\leq\alpha])$ on the set $\R^n\setminus [f\leq \alpha]$, which is due to the convexity of $[f\leq \alpha]$: indeed, it suffices to notice that $\bd([f_{\varepsilon}\leq \alpha]) = [d(\cdot,[f\leq \varepsilon]) = \varepsilon]$.\smallskip\newline
To establish $(ii)$, we consider the following straightforward computation:
    \begin{align*}
        (f_{\varepsilon_1}\diamond I_{\varepsilon_2\Ball})(x) &= \inf_{w_2\in \Ball} f_{\varepsilon_1}(x-\varepsilon_2w_2)\\
        &= \inf_{w_2\in \Ball} \inf_{w_1\in \Ball} f(x-\varepsilon_1w_1-\varepsilon_2w_2) = \inf_{w\in \Ball} f(x-\varepsilon w) = f_{\varepsilon}(x). 
    \end{align*}
The assertion follows by interchanging the roles of $\varepsilon_1$ and $\varepsilon_2$. \smallskip\newline
Let us now deal with $(iii)$. The first part of this assertion is straightforward. Indeed, since $z\in [f\leq f_{\varepsilon}]$, one has that $f(z)\leq f_{\varepsilon}(x)$. On the other hand, since the infimum in~\eqref{eq:Def-MaxConvolution} is exact, there exists $w\in \varepsilon\Ball$ such that $f_{\varepsilon}(x) = f(x-w)$. Thus, the distance of $x$ to $[f\leq f_{\varepsilon}]$ is smaller than $\varepsilon$. This yields that $\|x-z\|\leq \varepsilon$, and so $f_{\varepsilon}(x)\leq f(z)$.
    
    Now, let $x\in \dom f_{\varepsilon}$ and let $z = \proj(x; [f\leq f_{\varepsilon}(x)])$. Set $w = x-z$ and notice (as mentioned before) that $\|w\|\leq \varepsilon$. Therefore, for every $y\in \dom f$, $f_{\varepsilon}(y+w) \leq f(y)$. Let us also notice that 
    \[
    \{y+w\ :\ y\in [f\leq f(z)]\}\subset [f_{\varepsilon}\leq f_{\varepsilon}(x)].
    \] 
It follows that
    \begin{align*}
        \slope{f}(z) = \limsup_{[f\leq f(z)]\ni y\to z}\frac{f(z) - f(y)}{d(z,y)}
        &= \limsup_{[f\leq f(z)]\ni y\to z}\frac{f_{\varepsilon}(x) - f(y)}{d(x,y+w)}\\
        &\leq \limsup_{[f\leq f(z)]\ni y\to z}\frac{f_{\varepsilon}(x) - f_{\varepsilon}(y+w)}{d(x,y+w)}\\
        &\leq \limsup_{[f_{\varepsilon}\leq f_{\varepsilon}(x)]\ni y\to x}\frac{f_{\varepsilon}(x) - f_{\varepsilon}(y)}{d(x,y)} = \slope{f_{\varepsilon}}(x). 
    \end{align*}
It remains to establish $(iv)$. This follows from a minor adaptation of \cite[Proposition~3.1]{SeegerVolle1995}. Choose $x\in \R^d$ and $\delta>0$, and take $B = \overline{B}(x,\delta+\varepsilon) = \overline{B}(0,\delta)+\varepsilon\Ball$. Since $B$ is compact, the function $f$ is Lipschitz continuous on $B$. Let us denote by $L_B>0$ this Lipschitz constant and take $y,z\in B(x,\delta)$. Let $w\in \varepsilon\Ball$ be such that $f_{\varepsilon}(y) = f(y+w)$. Then, since $y+w,z+w\in B$ we deduce
    \begin{align*}
    f_{\varepsilon}(z) \leq f(z+w) \leq f(y+w) + L_{B}\|y-z\| = f_{\varepsilon}(y) +L_{B}\|y-z\|.
    \end{align*}
Interchanging the roles of $y$ and $z$ in the above development, we obtain
    \[
    |f_{\varepsilon}(z)- f_{\varepsilon}(y)| \leq L_{B}\|y-z\|,
    \]
which yields that $f_{\varepsilon}$ is Lipschitz continuous on $ B(x,\delta)$. \smallskip\newline
The proof is complete.
\end{proof}


Let us now choose $x_0\in \R^d$ such that $\limSlope{f}(x_0)>\ell$ (recall definition in \eqref{eq:limSlope-def}). Pick $\delta>0$ sufficiently small such that 
\begin{equation}\label{ortega}
\slope{f}(y)>\ell, \text{  for every } y\in \bar B(x_0,\delta) \subset \dom f
\end{equation} 
and consider the function
\begin{equation}\label{eq:h}
h := f + I_{\bar B(x_0, \delta)}.
\end{equation}
Notice that since $\min f = 0$, one has that $\min h \geq 0$.
\medskip

\begin{lemma}\label{lemma:SlopeIntersectionAwayFromZero} Assume that \eqref{ortega} holds and $h$ is given by~\eqref{eq:h}. Then the limiting slope $\limSlope{h}$ is strictly positive on $\dom h\setminus\argmin h$.
\end{lemma}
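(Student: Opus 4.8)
```latex
\textbf{Plan.} The goal is to show $\limSlope{h}(x) > 0$ for every $x \in \dom h \setminus \argmin h$, where $h = f + I_{\bar B(x_0,\delta)}$ and $\dom h = \bar B(x_0,\delta)$. The essential difference between $h$ and $f$ is the presence of the boundary $\bd \bar B(x_0,\delta)$, on which the indicator switches from $0$ to $+\infty$. On the interior $B(x_0,\delta)$ the two functions agree, so there the slope behaves well; the delicate points are those on the sphere $\bd \bar B(x_0,\delta)$. The plan is to split the analysis into two cases according to whether $x$ lies in the interior or on the boundary of the ball.

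\textbf{Step 1 (interior points).} Fix $x \in B(x_0,\delta) \setminus \argmin h$. Since $B(x_0,\delta)$ is open and $h \equiv f$ on it, for $y$ close enough to $x$ we have $h(y) = f(y)$, and the difference quotients defining $\slope{h}(y)$ coincide with those for $\slope{f}(y)$; hence $\slope{h}(y) = \slope{f}(y)$ on a neighborhood of $x$. By hypothesis~\eqref{ortega}, $\slope{f}(y) > \ell$ throughout $\bar B(x_0,\delta)$, so $\slope{h}(y) > \ell$ near $x$. Since the limiting slope is the lsc closure and convergence $y \to_h x$ forces $y \to x$ with $y$ eventually interior, I would conclude $\limSlope{h}(x) \geq \ell > 0$.

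\textbf{Step 2 (boundary points).} Fix $x \in \bd\bar B(x_0,\delta)$ with $h(x) = f(x) < +\infty$. Here the main obstacle lies: when computing $\limSlope{h}(x) = \liminf_{y\to_h x}\slope{h}(y)$, the approaching points $y$ with $h(y)$ finite must lie in $\bar B(x_0,\delta)$, and those on the sphere could a priori have small slope because the indicator blocks descent in the outward normal direction. The key observation is that for $y \in \bd\bar B(x_0,\delta)$, the slope $\slope{h}(y)$ is computed using only competitors inside the ball, so $\slope{h}(y) \geq$ the directional descent rate of $f$ along admissible (inward) directions. I would argue that the full slope $\slope{f}(y) > \ell$ is witnessed by a descent direction that can be realized, up to a controlled perturbation, by points remaining in $\bar B(x_0,\delta)$: by quasiconvexity the sublevel set $[f \le f(y)]$ is convex, and the nearest-point rate controls the slope, so intersecting the descent cone with the tangent cone of the convex ball still yields a descent rate bounded below. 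Since points approaching $x$ under $\to_h$ are either interior (handled by Step 1) or on the sphere, in either case $\slope{h}(y) > \ell'$ for some fixed $\ell' > 0$, whence $\limSlope{h}(x) \geq \ell' > 0$.

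\textbf{Main difficulty.} The crux is the boundary analysis in Step 2: ensuring that adding the indicator does not kill the slope at points of $\bd\bar B(x_0,\delta)$. One must verify that the descent directions guaranteeing $\slope{f}(y) > \ell$ are compatible with staying inside the ball, which is where quasiconvexity of $f$ and convexity of the ball must be combined; the cleanest route is to use the distance-to-sublevel-set estimate from \cite{AzeCorvellec2004}, exactly as in the proof of Lemma~\ref{lemma:StandardHyp}, now applied to $h$ on the compact domain $\bar B(x_0,\delta)$.
```
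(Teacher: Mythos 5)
Your Step 1 is correct and unproblematic: on $B(x_0,\delta)$ one has $h\equiv f$ near $x$, so $\slope{h}=\slope{f}>\ell$ there and the liminf defining $\limSlope{h}(x)$ is at least $\ell$. The genuine gap is in Step 2, and it sits exactly where you locate it. Your key assertion --- that intersecting the descent directions of $f$ with the tangent cone of the ball ``still yields a descent rate bounded below'' by some fixed $\ell'>0$ --- is not an argument but a restatement of the claim, and as a \emph{uniform} statement it is actually false: if the minimum of $f$ over $\bar B(x_0,\delta)$ is attained on the sphere with the descent direction of $f$ pointing outward, then at sphere points $y$ with $h(y)$ close to $\min h$ the admissible descent is nearly tangential and $\slope{h}(y)\to 0$. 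So no single $\ell'$ works on all of $\dom h\setminus\argmin h$; the conclusion of the lemma is only pointwise positivity, and any correct proof must produce a \emph{level-dependent} lower bound that degenerates near $\argmin h$ but is continuous along $y\to_h x$. Your closing suggestion to apply the Az\'e--Corvellec estimate ``to $h$'' is moreover circular: the hypothesis of \cite[Theorem~2.1]{AzeCorvellec2004} is precisely a slope lower bound for $h$, which is what you are trying to prove.

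The paper supplies the missing quantitative mechanism, which is absent from your sketch: Hoffmann's lemma \cite{Hoffmann1992} on the distance to the intersection of two convex sets. Writing $[h\le\beta]=[f\le\beta]\cap \bar B(x_0,\delta)$ (both sets convex, by quasiconvexity), and observing that for $\beta>\min h$ the sublevel set $[f\le\beta]$ penetrates the ball to depth $\delta-d(x_0,[f\le\beta])>0$, Hoffmann's lemma yields
$d\big(z,[h\le\beta]\big)\le \tfrac{\delta+d(x_0,[f\le\beta])}{\delta-d(x_0,[f\le\beta])}\, d\big(z,[f\le\beta]\big)$.
Feeding this into the representation of the slope through distances to sublevel sets, $\slope{h}(z)\ge \limsup_{\beta\nearrow h(z)}\,(h(z)-\beta)/d(z,[h\le\beta])$, and using~\eqref{ortega} gives $\slope{h}(z)\ge \phi(h(z))$ with $\phi(\alpha)=\tfrac{\delta-d(x_0,[f\le\alpha])}{\delta+d(x_0,[f\le\alpha])}\,\ell$. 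This bound is exactly the controlled degradation your ``controlled perturbation'' gestures at: it handles interior and boundary points simultaneously, and since $z\mapsto\phi(h(z))$ is continuous and strictly positive off $\argmin h$, the liminf over $y\to_h x$ (where $h(y)\to h(x)>\min h$, so the degenerate levels are never approached) stays positive. Without Hoffmann's lemma, or an equivalent quantitative estimate converting the error bound for $f$ into one for $[f\le\beta]\cap\bar B(x_0,\delta)$, your Step 2 does not close.
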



\begin{proof}
Set $C:=\dom h= \bar B(x_0,\delta)$. Choose $z \in \dom h\setminus\argmin h$ set $\alpha = f(z)$ and $\beta \in (\min h, \alpha)$. Note that the set $[\min h <f< \beta]\cap C$ has nonempty interior and consequently, by construction,
	\begin{align*}
		\sup_{y\in [f\leq \beta]\cap C} d(y, \R^d\setminus C) &\geq d(\proj(x_0;[f\leq \beta]), \R^d\setminus C)\\
		&= \delta- d(x_0,[f\leq \beta]) > 0.
	\end{align*}
	Thus, applying \cite[Lemma 1]{Hoffmann1992}, we deduce
\[
d(z, [h\leq \beta]) =\, d(z, [f\leq \beta]\cap C) \, \leq \, \frac{\delta+d(x_0,[f\leq \beta])}{\delta-d(x_0,[f\leq \beta])}d(z,[f\leq \beta]).
\]
Taking the limit $\beta\nearrow \alpha =f(z)$, we obtain
\begin{align*}
		\slope{h}(z) &= \limsup_{\beta\nearrow \alpha}\frac{\alpha - \beta}{d(z,[h\leq \beta])}\geq\,  \limsup_{\beta\nearrow \alpha}\, \frac{\delta - d(x_0,[f\leq \beta])}{\delta+d(x_0,[f\leq \beta])}\frac{\alpha-\beta}{d(z,[f\leq \beta])} \bigskip \\
		&= \, \frac{\delta - d(x_0,[f\leq \alpha])}{\delta+d(x_0,[f\leq \alpha])}\,\slope{f}(z)\,\geq \, \underbrace{\frac{\delta - d(x_0,[f\leq \alpha])}{\delta+d(x_0,[f\leq \alpha])}\ell}_{:=\phi(\alpha)}.
\end{align*}
Since the function $z\mapsto \phi(h(z))$ is continuous and strictly positive on $\dom h\setminus \argmin h$ the assertion follows. 
\end{proof}
\medskip

\begin{lemma}\label{lemma:Regh} For every $\varepsilon>0$, the function $h_{\varepsilon} = h\diamond I_{\varepsilon\Ball}$ is quasiconvex, coercive,
and satisfies~\ref{hyp:NonemptyIntArgmin}--\ref{hyp:BoundedInternalCurvature}.
\end{lemma}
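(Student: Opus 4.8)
The entire argument rests on the structural identity
\[
[h_{\varepsilon}\leq\alpha]=[h\leq\alpha]+\varepsilon\Ball,\qquad\forall\alpha\in\R,
\]
which holds by Definition~\ref{def:f-epsilon} (the infimum defining $h_\varepsilon$ is attained because $\dom h=\bar B(x_0,\delta)$ is compact and $h$ is lower semicontinuous). The plan is to reduce every claim to this identity. First I would dispose of the three soft properties. Since $h$ is quasiconvex, each $[h\leq\alpha]=[f\leq\alpha]\cap\bar B(x_0,\delta)$ is convex, so its Minkowski sum with $\varepsilon\Ball$ is convex; as all sublevel sets of $h_\varepsilon$ are thus convex, $h_\varepsilon$ is quasiconvex. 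Coercivity is immediate because $\dom h_\varepsilon=\bar B(x_0,\delta+\varepsilon)$ is bounded and each sublevel set, being a sum of two compact sets, is compact. For~\ref{hyp:NonemptyIntArgmin}, coercivity is already in hand, while for $\alpha>\inf h_\varepsilon=\min h$ the set $[h\leq\alpha]$ is nonempty, so $[h_\varepsilon\leq\alpha]$ contains a ball of radius $\varepsilon$ and therefore has nonempty interior.

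Next I would establish~\ref{hyp:BoundedInternalCurvature}, which is the clean payoff of the regularization. Writing $C_\beta:=[h\leq\beta]$ (convex), the identity gives $[h_\varepsilon\leq\beta]=\{x:d_{C_\beta}(x)\leq\varepsilon\}$, hence $\R^d\setminus\into([h_\varepsilon\leq\beta])=\{x:d_{C_\beta}(x)\geq\varepsilon\}$. The complement of the open $\varepsilon$-enlargement of a convex set is $\varepsilon$-prox-regular: the ball of radius $\varepsilon$ centered at $\proj_{C_\beta}(y)$ realizes the proximal normal at each boundary point $y$, which is the same mechanism underlying Proposition~\ref{prop:PropertiesMaxConv}\ref{propMaxConv-SmoothBoundary}. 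Crucially this radius does not depend on $\beta$, so~\ref{hyp:BoundedInternalCurvature} holds with the uniform constant $r=\varepsilon$ and any $\eta<\alpha-\inf h_\varepsilon$ (chosen so that $C_\beta\neq\emptyset$).

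The conceptual core is~\ref{hyp:BoundedSlope}. I would first shift $h$ by $-\min h$ so that Proposition~\ref{prop:PropertiesMaxConv}\ref{propMaxConv-IneqSlope} applies (slopes and the shapes of sublevel sets are unaffected by an additive constant); it yields, for every $x\notin\argmin h_\varepsilon$,
\[
\slope{h_\varepsilon}(x)\geq\slope{h}(z),\qquad z=\proj\bigl(x;[h\leq h_\varepsilon(x)]\bigr),\quad h(z)=h_\varepsilon(x)>\min h.
\]
Combining this with the explicit lower bound $\slope{h}(z)\geq\phi(h(z))$ obtained inside the proof of Lemma~\ref{lemma:SlopeIntersectionAwayFromZero}, where $\phi(\alpha)=\tfrac{\delta-d(x_0,[f\leq\alpha])}{\delta+d(x_0,[f\leq\alpha])}\,\ell$ is continuous and strictly positive on $(\min h,\sup h]$, gives $\slope{h_\varepsilon}(y)\geq\phi(h_\varepsilon(y))$ at every non-minimizer $y$. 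Given $x\notin\argmin h_\varepsilon$, lower semicontinuity of $h_\varepsilon$ (automatic, since its sublevel sets are closed) furnishes $\delta'>0$ with $h_\varepsilon(y)>\min h+\tfrac12(h_\varepsilon(x)-\min h)$ for $y\in B(x,\delta')$; as $\phi$ is bounded below by a positive constant on the compact interval $[\min h+\tfrac12(h_\varepsilon(x)-\min h),\sup h]$, we get $\slope{h_\varepsilon}(y)>\ell'>0$ there, which is exactly~\ref{hyp:BoundedSlope}.

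Finally, to legitimately place $h_\varepsilon$ in the class of Section~\ref{sec:Reversible} one must check it is continuous on $\into\dom h_\varepsilon$, and this I expect to be the main technical obstacle. The approach is to write $h_\varepsilon(x)=\min\{f(y):y\in\bar B(x_0,\delta)\cap\bar B(x,\varepsilon)\}$ and to note that on $\into\dom h_\varepsilon=B(x_0,\delta+\varepsilon)$ the two balls overlap with nonempty interior, so the intersection set-valued map is locally Hausdorff-Lipschitz; as the marginal value of the Lipschitz function $f$ over a Hausdorff-Lipschitz feasible set, $h_\varepsilon$ is then locally Lipschitz. The delicacy is precisely that this intersection/marginal estimate degenerates as $x$ approaches $\bd\dom h_\varepsilon$, where the overlapping lens collapses to a single point, so the Lipschitz constant blows up near the outer sphere; local Lipschitzness survives only on the open interior, which is all that is needed. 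By contrast, \ref{hyp:BoundedInternalCurvature} comes essentially for free from the $\varepsilon$-enlargement, and~\ref{hyp:NonemptyIntArgmin} together with quasiconvexity and coercivity are routine.
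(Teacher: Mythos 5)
Your proof is correct and follows the same skeleton as the paper's: quasiconvexity, coercivity, \ref{hyp:NonemptyIntArgmin} and \ref{hyp:BoundedInternalCurvature} (with $r=\varepsilon$) all read off from the identity $[h_\varepsilon\leq\alpha]=[h\leq\alpha]+\varepsilon\Ball$ as in Proposition~\ref{prop:PropertiesMaxConv}~(\ref{propMaxConv-SmoothBoundary}), and \ref{hyp:BoundedSlope} from the slope transfer of Proposition~\ref{prop:PropertiesMaxConv}~(\ref{propMaxConv-IneqSlope}) combined with Lemma~\ref{lemma:SlopeIntersectionAwayFromZero}. The one place you genuinely add something is the \ref{hyp:BoundedSlope} step: the paper disposes of it in a single citation, but the \emph{statement} of Lemma~\ref{lemma:SlopeIntersectionAwayFromZero} only gives pointwise positivity of $\limSlope{h}$, which is strictly weaker than the locally uniform bound $\slope{h_\varepsilon}>\ell'$ on a ball that \ref{hyp:BoundedSlope} demands; your move of extracting the quantitative estimate $\slope{h}(z)\geq\phi(h(z))$ from \emph{inside} that lemma's proof, transferring it to $\slope{h_\varepsilon}(y)\geq\phi(h_\varepsilon(y))$, and then using lower semicontinuity of $h_\varepsilon$ (or, even more simply, the monotonicity of $\phi$) to keep values away from $\min h$ near a non-minimizer is exactly the glue the paper leaves tacit, and it is the right one. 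Your shift by $-\min h$ before invoking Proposition~\ref{prop:PropertiesMaxConv} (which assumes $\min=0$, whereas $h$ only satisfies $\min h\geq 0$) is another small care point the paper glosses over. Finally, your last paragraph on local Lipschitz continuity of $h_\varepsilon$ on $\into(\dom h_\varepsilon)$, while plausible via the marginal-function/lens argument, is outside the scope of this lemma: in the paper that issue is deferred to the proof of Theorem~\ref{thm:LocalAlmostEveryDescent}, where it is obtained for free from the local coincidence $h_\varepsilon=g_\varepsilon$ together with Proposition~\ref{prop:PropertiesMaxConv}~(\ref{propMaxConv-Lipschitz}) applied to the finite-valued function $g$ --- a cleaner route than estimating the Hausdorff-Lipschitz behavior of the intersection of two balls.
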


\begin{proof}
Since $f$ is quasiconvex, it is straightforward that $h$ is quasiconvex. Moreover, by construction $\dom h\equiv \bar B(x_0, \delta)$. Thus, Proposition~\ref{prop:PropertiesMaxConv} entails that $h_{\varepsilon}$ is quasiconvex and coercive, where the last property follows from the fact that the domain $\dom h_{\varepsilon}$ coincides with the compact ball $\bar B(x_0, \delta) +\varepsilon\Ball$.
	\medskip
	
Notice further that coercivity and Proposition~\ref{prop:PropertiesMaxConv}~(\ref{propMaxConv-SmoothBoundary}) yield that the function $h_{\varepsilon}$ verifies~\ref{hyp:NonemptyIntArgmin} and~\ref{hyp:BoundedInternalCurvature}, with $r=\varepsilon$, while~\ref{hyp:BoundedSlope} follows by Lemma~\ref{lemma:SlopeIntersectionAwayFromZero} and Proposition~\ref{prop:PropertiesMaxConv}~(\ref{propMaxConv-IneqSlope}). \smallskip\newline The proof is complete.
\end{proof}


We are now ready to establish the main result of this section, which provides steepest descent curves for almost every point of the regularized function $f_{\varepsilon}$ (where $f\in \mathcal{Q}$ is not necessarily assumed to be bounded from below) stemming from non-lower critical points, in the sense of \eqref{eq:NonLowerCriticalCondition} below. Let us set:
\begin{equation}\label{eq:NonLowerCriticalCondition}
\mathcal{U}_{\varepsilon}:=\{ x\in \R^d\, : \, \limSlope{f} (\proj(x; [f\leq f_{\varepsilon}(x)])) > 0\}
\end{equation}

\begin{theorem}\label{thm:LocalAlmostEveryDescent} Let $f\in\mathcal{Q}$ (not necessarily bounded from below) and $\varepsilon>0$. The regularized function $f_{\varepsilon}$ admits steepest descent curves emanating from almost every $x\in\mathcal{U}_{\varepsilon}$. 
In particular, if $f$ verifies~\ref{hyp:BoundedSlope}, then $f_{\varepsilon}$ admits steepest descent curves emanating from almost every $x\in\dom f_{\varepsilon}$. 	
\end{theorem}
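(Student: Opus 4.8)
The plan is to localize: near each point of $\mathcal{U}_\varepsilon$ I replace the possibly non-coercive, globally unbounded function $f_\varepsilon$ by a regularized function to which Theorem~\ref{thm:SteepestDescentAlmostEverywhere} directly applies, and then patch the local conclusions by a covering argument. First I fix $x_0\in\mathcal{U}_\varepsilon$ and set $z_0:=\proj(x_0;[f\le f_\varepsilon(x_0)])$, so that $\limSlope{f}(z_0)>0$ by the definition~\eqref{eq:NonLowerCriticalCondition} of $\mathcal{U}_\varepsilon$. Choosing $\ell\in(0,\limSlope{f}(z_0))$ and using that $f\in\mathcal{Q}$ is finite-valued and continuous together with the definition~\eqref{eq:limSlope-def} of the limiting slope, I obtain $\delta>0$ with $\slope{f}(y)>\ell$ for all $y\in\bar B(z_0,\delta)$; that is, \eqref{ortega} holds with $z_0$ in place of $x_0$. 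Setting $h:=f+I_{\bar B(z_0,\delta)}$ and $h_\varepsilon:=h\diamond I_{\varepsilon\Ball}$, Lemma~\ref{lemma:Regh} guarantees that $h_\varepsilon$ belongs to the class treated in Section~\ref{sec:Reversible} (lower semicontinuous, quasiconvex, continuous on the interior of its domain) and satisfies~\ref{hyp:NonemptyIntArgmin}--\ref{hyp:BoundedInternalCurvature}.

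The crux, and the step I expect to be the main obstacle, is the local identification $h_\varepsilon=f_\varepsilon$ near $x_0$. Since $h\ge f$ pointwise and max-convolution is monotone, one always has $h_\varepsilon\ge f_\varepsilon$. For the reverse inequality I write $z(x):=\proj(x;[f\le f_\varepsilon(x)])$, so that $f_\varepsilon(x)=f(z(x))$ and $\|x-z(x)\|\le\varepsilon$ by Proposition~\ref{prop:PropertiesMaxConv}~(\ref{propMaxConv-IneqSlope}); if $z(x)\in\bar B(z_0,\delta)=\dom h$, then $h(z(x))=f(z(x))$ and taking $w=(x-z(x))/\varepsilon\in\Ball$ gives $h_\varepsilon(x)\le h(z(x))=f_\varepsilon(x)$, hence equality. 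It therefore remains to secure $z(x)\in\bar B(z_0,\delta)$ for $x$ near $x_0$, which follows from continuity of $x\mapsto z(x)$ at $x_0$: the slope bound~\eqref{ortega} makes the sublevel-set map $\alpha\mapsto[f\le\alpha]$ locally Lipschitz around the level $f_\varepsilon(x_0)$ near $z_0$ (Azé--Corvellec, as exploited in Lemma~\ref{lemma:StandardHyp}), and the metric projection onto a convex set is continuous jointly in the point and in the Hausdorff-continuously moving set. Since $z(x_0)=z_0$ is the \emph{center} of $\bar B(z_0,\delta)$, continuity forces $z(x)\in B(z_0,\delta)$ on some neighborhood $V$ of $x_0$, whence $h_\varepsilon=f_\varepsilon$ on $V$ and, the slope being a local notion, $\slope{h_\varepsilon}=\slope{f_\varepsilon}$ there. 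A direct estimate $d(x_0,\bar B(z_0,\delta))\le\max\{0,\varepsilon-\delta\}<\varepsilon$ moreover places $x_0\in\into(\dom h_\varepsilon)$.

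With the identification in hand I apply Theorem~\ref{thm:SteepestDescentAlmostEverywhere} to $h_\varepsilon$ on the open set $\mathcal{O}_{x_0}:=V\cap\into(\dom h_\varepsilon)$, on which $h_\varepsilon$ is locally Lipschitz because it coincides with $f_\varepsilon$ (locally Lipschitz by Proposition~\ref{prop:PropertiesMaxConv}~(\ref{propMaxConv-Lipschitz})). This yields a Lebesgue-null set $N_{x_0}\subset\mathcal{O}_{x_0}$ off which $h_\varepsilon$ admits a steepest descent curve; by continuity such a curve starting in $\mathcal{O}_{x_0}$ stays in $V$ for a short time, where $f_\varepsilon$ and its slope agree with those of $h_\varepsilon$, so it is equally a steepest descent curve of $f_\varepsilon$. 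Denoting by $\widetilde{\mathcal{N}}$ the set of $x\in\mathcal{U}_\varepsilon$ from which $f_\varepsilon$ admits no steepest descent curve, we get $\widetilde{\mathcal{N}}\cap\mathcal{O}_{x_0}\subset N_{x_0}$. As $x_0\in\mathcal{O}_{x_0}$ for every $x_0\in\mathcal{U}_\varepsilon$, the family $\{\mathcal{O}_{x_0}\}$ covers $\mathcal{U}_\varepsilon$; extracting a countable subcover $\{\mathcal{O}_{x_k}\}_{k\in\N}$ by the Lindelöf property of $\R^d$ gives $\widetilde{\mathcal{N}}\subset\bigcup_k N_{x_k}$, a countable union of null sets, whence $\mathcal{L}_d(\widetilde{\mathcal{N}})=0$.

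Finally, for the particular case assume $f$ satisfies~\ref{hyp:BoundedSlope}. If $\argmin f=\emptyset$ then every projection point is a non-minimizer, so $\mathcal{U}_\varepsilon=\R^d=\dom f_\varepsilon$ and the claim is immediate. Otherwise $\min f_\varepsilon=\min f$ and $\argmin f_\varepsilon=\argmin f+\varepsilon\Ball$; for $x\in\dom f_\varepsilon\setminus\argmin f_\varepsilon$ one has $f(z(x))=f_\varepsilon(x)>\min f$, so $z(x)\notin\argmin f$ and~\ref{hyp:BoundedSlope} gives $\limSlope{f}(z(x))>0$, i.e. $x\in\mathcal{U}_\varepsilon$. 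Hence $\dom f_\varepsilon\setminus\argmin f_\varepsilon\subset\mathcal{U}_\varepsilon$, and almost every such $x$ admits a steepest descent curve by the first part, while every point of $\argmin f_\varepsilon$ trivially admits the constant curve (the slope of $f_\varepsilon$ vanishing at its minimizers). Therefore $f_\varepsilon$ admits steepest descent curves emanating from almost every $x\in\dom f_\varepsilon$.
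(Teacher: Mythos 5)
Your architecture is the paper's own: localize at the projection point $z_0$, pass to $h=f+I_{\bar B(z_0,\delta)}$, identify the regularization of $h$ with $f_\varepsilon$ near $x_0$ via continuity of the projection, invoke Lemma~\ref{lemma:Regh} together with Theorem~\ref{thm:SteepestDescentAlmostEverywhere}, and conclude by a covering argument. But there is one genuine gap, and it sits exactly at the feature the theorem advertises, namely that $f$ need not be bounded below. You define $h_\varepsilon:=h\diamond I_{\varepsilon\Ball}$; by \eqref{eq:Def-MaxConvolution} this equals $\inf_{w\in\Ball}\max\{h(x-\varepsilon w),0\}$, so max-convolution with an indicator \emph{clips all values at the level $0$}. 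The formula $h_\varepsilon(x)=\inf_{w\in\Ball}h(x-\varepsilon w)$, on which your reverse inequality $h_\varepsilon(x)\le h(z(x))=f_\varepsilon(x)$ rests, is valid only when $h\ge 0$. If $f_\varepsilon(x_0)<0$ — e.g.\ $f(x)=\langle c,x\rangle$ with $c\neq 0$, for which $\mathcal{U}_\varepsilon=\R^d$ and $f_\varepsilon<0$ on a half-space — then $h\diamond I_{\varepsilon\Ball}\ge 0>f_\varepsilon$ near $x_0$ and your local identification $h_\varepsilon=f_\varepsilon$ is false as written. Moreover, Lemma~\ref{lemma:Regh} (and Proposition~\ref{prop:PropertiesMaxConv} and Lemma~\ref{lemma:SlopeIntersectionAwayFromZero}, on which it rests) is stated under the standing normalization $\min f=0$, which you never arrange, so its invocation is out of scope. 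This is precisely why the paper inserts the truncation step you omit: it sets $r=\min_{\bar B(z,2\delta)}f$ and replaces $f$ by $g=\max\{f,r\}$, which is quasiconvex, coincides with $f$ on $B(z,2\delta)$ (hence $g_\varepsilon=f_\varepsilon$ near $x$), and after normalizing $r$ to $0$ brings the whole $\min=0$ machinery of Section~\ref{sec:main} to bear on $h=g+I_{\bar B(z,\delta)}$. The repair of your proof is exactly this one step; without it, the argument fails on every $f$ unbounded below.

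A secondary imprecision: for the continuity of $x\mapsto z(x)$ you invoke Hausdorff--Lipschitz behavior of $\alpha\mapsto[f\le\alpha]$ for the \emph{global} $f$, but these sublevel sets may be unbounded (again, $f$ linear), in which case $d_H$ is $+\infty$ and the cited Lipschitz estimate is vacuous. The paper avoids this by projecting onto the sublevels of $h$ (compact, since $\dom h$ is a ball) and quoting Attouch--Wets together with Lemma~\ref{lemma:StandardHyp}; once the truncation above is in place, you should do the same. On the positive side, your Lindel\"{o}f covering argument and your explicit treatment of the ``in particular'' statement — splitting off $\argmin f_\varepsilon=\argmin f+\varepsilon\Ball$, where the constant curve is a steepest descent curve since the slope vanishes — are correct and in fact more detailed than the paper, which leaves both points implicit.
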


\begin{proof}
Let $x\in \mathcal{U}_{\varepsilon}$, and let $z=\proj(x,[f\leq f_{\varepsilon}(x)])$. Then, there exist $\delta,\ell>0$  such that for all $z^{\prime}\in B(z,2\delta)\cap\dom f$, $\slope{f}(z^{\prime})>\ell$. 
\medskip
	
	Take $r=\min_{\bar B(z,2\delta)} f$ and note that $f$ and $g=\max\{f,r\}$ coincide over $B(z,2\delta)$, and so $f_{\varepsilon}$ and $g_{\varepsilon}$ coincide on a neighborhood of $x$. Thus, we can replace $f$ by $g$ and assume, without losing any generality, that $r=\min g = 0$.
	
	Take $h = g + I_{\bar B(z,\delta)}$. Then, by Lemma~\ref{lemma:SlopeIntersectionAwayFromZero}, $h$ verifies~\ref{hyp:BoundedSlope}. By invoking \cite[Example 4.1]{AttouchWets1993} and Lemma~\ref{lemma:StandardHyp}, we deduce that the mapping $y\mapsto \proj(y,[h\leq g_{\varepsilon}(y)])$ is continuous over the set $[g_{\varepsilon}>\min h]$. This yields that there exists $\eta>0$ such that
	\[
	\proj(y,[h\leq g_{\varepsilon}(y)]) \subset z+\frac{\delta}{2}\Ball,\quad\forall y\in B(x,\eta). 
	\]
	In particular,  for all $y\in B(x,\eta)$ one has that $\proj(y,[g\leq g_{\varepsilon}(y)]) = \proj(y,[h\leq g_{\varepsilon}(y)])$ and so,
	\begin{align*}
	g_{\varepsilon}(y) \geq h_{\varepsilon}(y)
	 &= \inf_{w \in y+\varepsilon\Ball} h(y-w) \\
& \geq  \inf_{w \in y+\varepsilon\Ball} g(y-w) = g_{\varepsilon}(y).
	\end{align*}

	Thus, $g_{\varepsilon}$ and $h_{\varepsilon}$ coincide in $B(x,\eta)$. In particular, $h_{\varepsilon}$ is locally Lipschitz on $B(x,\eta)\cap\into(\dom h_{\varepsilon})$. Now, applying Lemma~\ref{lemma:Regh} and Theorem~\ref{thm:SteepestDescentAlmostEverywhere}, we deduce that $h_{\varepsilon}$, and so $g_{\varepsilon}$, admits steepest descent curves emanating from almost every point in $$B(x,\eta)\bigcap\into(\dom h_{\varepsilon}) = B(x,\eta)\bigcap\into(\dom g_{\varepsilon}).$$ The proof is complete.
\end{proof}


\bigskip

\noindent(\textit{Open questions}) The question of characterizing locally Lipschitz functions that admit steepest descent curves on their domains is challenging. This is open even for the class of quasiconvex functions, where a sufficient condition was obtained in \cite{DrusvyatskiyIoffeLewis2015}: namely, this happens whenever the slope mapping $x\mapsto \slope{f}(x)$ is lower semicontinuous, since in this case, every near-steepest descent curve is also a steepest descent curve. Concurrently, it is not known if Theorem~\ref{thm:LocalAlmostEveryDescent} presented hereby is tight, or if the regularized quasiconvex functions $f_{\varepsilon}$ admits steepest descent curves at every point. In addition, it is still unclear if Theorem~\ref{thm:LocalAlmostEveryDescent} holds true for extended-valued quasiconvex functions that are locally Lipschitz on their domains. The main obstruction seems to  be the max-convolution $f_{\varepsilon}$ does not directly inherit the Lipschitz continuity near the boundary of the regularized domain $\dom f_{\varepsilon}$. Last, but not least, we do not dispose a satisfactory characterization for the slope to be lower semicontinuous. In particular, it is not known if the slope of a regularized function $x\mapsto \slope{f_{\varepsilon}}(x)$ is lower semicontinuous or not.

\bigskip

\noindent\rule{5cm}{1pt} \smallskip\newline\noindent\textbf{Acknowledgements.}
 The first author acknowledges support from
the Austrian Science Fund (FWF \textsc{10.55776/P36344}). The second author was
partially funded by the grant \textsc{Fondecyt 11220586} (ANID, Chile) and by the \textsc{BASAL} fund \textsc{FB210005} (Centers of excellence, Chile).

\bibliographystyle{plain}

\begin{thebibliography}{10}
	
	\bibitem{AmbrosioGigliSavare2008}
	L.~Ambrosio, N.~Gigli, and G.~Savar\'{e}.
	\newblock {\em Gradient flows in metric spaces and in the space of probability
		measures}.
	\newblock Lectures in Mathematics ETH Z\"{u}rich. Birkh\"{a}user Verlag, Basel,
	second edition, 2008.
	
	\bibitem{AttouchButtazzo2014}
	H.~Attouch, G.~Buttazzo, and G.~Michaille.
	\newblock {\em Variational analysis in {S}obolev and {BV} spaces}.
	\newblock MOS-SIAM Series on Optimization 17. Mathematical Optimization
	Society, Philadelphia, PA, second edition, 2014.
	
	\bibitem{AttouchWets1993}
	H.~Attouch and R.~Wets.
	\newblock Quantitative stability of variational systems. {II}. {A} framework
	for nonlinear conditioning.
	\newblock {\em SIAM J. Optim.}, 3(2):359--381, 1993.
	
	\bibitem{Aussel2014}
	D.~Aussel.
	\newblock New developments in quasiconvex optimization.
	\newblock In {\em Fixed point theory, variational analysis, and optimization},
	pages 171--205. CRC Press, Boca Raton, FL, 2014.
	
	\bibitem{AzeCorvellec2004}
	D.~Az\'{e} and J.-N. Corvellec.
	\newblock Characterizations of error bounds for lower semicontinuous functions
	on metric spaces.
	\newblock {\em ESAIM Control Optim. Calc. Var.}, 10(3):409--425, 2004.
	
	\bibitem{ColomboThibault2010prox}
	G.~Colombo and L.~Thibault.
	\newblock Prox-regular sets and applications.
	\newblock In {\em Handbook of nonconvex analysis and applications}, pages
	99--182. Int. Press, Somerville, MA, 2010.
	
	\bibitem{CzarneckiThibault2018}
	M.-O. Czarnecki and L.~Thibault.
	\newblock Sublevel representations of epi-{L}ipschitz sets and other
	properties.
	\newblock {\em Math. Program.}, 168(1-2):555--569, 2018.
	
	\bibitem{DD2017}
	A.~Daniilidis and D.~Drusvyatskiy.
	\newblock Sweeping by a tame process.
	\newblock {\em Ann. Inst. Fourier (Grenoble)}, 67(5):2201--2223, 2017.
	
	\bibitem{DDL2015}
	A.~Daniilidis, D.~Drusvyatskiy, and A.~S. Lewis.
	\newblock Orbits of geometric descent.
	\newblock {\em Canad. Math. Bull.}, 58(1):44--50, 2015.
	
	\bibitem{DH99}
	A.~Daniilidis and N.~Hadjisavvas.
	\newblock On the subdifferentials of quasiconvex and pseudoconvex functions and
	cyclic monotonicity.
	\newblock {\em J. Math. Anal. Appl.}, 237(1):30--42, 1999.
	
	\bibitem{DLS2010}
	A.~Daniilidis, O.~Ley, and S.~Sabourau.
	\newblock Asymptotic behaviour of self-contracted planar curves and gradient
	orbits of convex functions.
	\newblock {\em J. Math. Pures Appl. (9)}, 94(2):183--199, 2010.
	
	\bibitem{DeGiorgiMarinoTosques1980}
	E.~De~Giorgi, A.~Marino, and M.~Tosques.
	\newblock Problems of evolution in metric spaces and maximal decreasing curve.
	\newblock {\em Atti Accad. Naz. Lincei Rend. Cl. Sci. Fis. Mat. Nat. (8)},
	68(3):180--187, 1980.
	
	\bibitem{DrusvyatskiyIoffeLewis2015}
	D.~Drusvyatskiy, A.~D. Ioffe, and A.~S. Lewis.
	\newblock Curves of descent.
	\newblock {\em SIAM J. Control Optim.}, 53(1):114--138, 2015.
	
	\bibitem{EvansGariepy2015Measure}
	L.~Evans and R.~Gariepy.
	\newblock {\em Measure theory and fine properties of functions}.
	\newblock Textbooks in Mathematics. CRC Press, Boca Raton, FL, revised edition,
	2015.
	
	\bibitem{Hoffmann1992}
	A.~Hoffmann.
	\newblock The distance to the intersection of two convex sets expressed by the
	distances to each of them.
	\newblock {\em Math. Nachr.}, 157:81--98, 1992.
	
	\bibitem{Ioffe2017}
	A.~Ioffe.
	\newblock {\em Variational analysis of regular mappings}.
	\newblock Springer Monographs in Mathematics. Springer, Cham, 2017.
	
	\bibitem{LMV2015}
	M.~Longinetti, P.~Manselli, and A.~Venturi.
	\newblock On steepest descent curves for quasi convex families in {$\Bbb R^n$}.
	\newblock {\em Math. Nachr.}, 288(4):420--442, 2015.
	
	\bibitem{Moreau1977}
	J.-J. Moreau.
	\newblock Evolution problem associated with a moving convex set in a {H}ilbert
	space.
	\newblock {\em J. Differential Equations}, 26(3):347--374, 1977.
	
	\bibitem{PV2021}
	P.~P\'{e}rez-Aros and E.~Vilches.
	\newblock An enhanced {B}aillon-{H}addad theorem for convex functions defined
	on convex sets.
	\newblock {\em Appl. Math. Optim.}, 83(3):2241--2252, 2021.
	
	\bibitem{PeypouquetSorin2010}
	J.~Peypouquet and S.~Sorin.
	\newblock Evolution equations for maximal monotone operators: asymptotic
	analysis in continuous and discrete time.
	\newblock {\em J. Convex Anal.}, 17(3-4):1113--1163, 2010.
	
	\bibitem{RockafellarWets1998}
	R.~T. Rockafellar and R.~Wets.
	\newblock {\em Variational analysis}.
	\newblock Grundlehren der mathematischen Wissenschaften 317. Springer-Verlag,
	Berlin, 1998.
	
	\bibitem{SeegerVolle1995}
	A.~Seeger and M.~Volle.
	\newblock On a convolution operation obtained by adding level sets: classical
	and new results.
	\newblock {\em RAIRO Rech. Op\'{e}r.}, 29(2):131--154, 1995.
	
	\bibitem{Thibault2003}
	L.~Thibault.
	\newblock Sweeping process with regular and nonregular sets.
	\newblock {\em J. Differential Equations}, 193(1):1--26, 2003.
	
	\bibitem{Thibault2023}
	L.~Thibault.
	\newblock {\em Unilateral {Variational} {Analysis} in {Banach} {Spaces}:
		({Part} {I}: {General} {Theory} {Part} {II}: {Special} {Classes} of
		{Functions} and {Sets}}.
	\newblock World Scientific Publishing Co., Inc., River Edge, NJ, 2023.
	
	\bibitem{Villani2009}
	C.~Villani.
	\newblock {\em Optimal transport}.
	\newblock Grundlehren der mathematischen Wissenschaften 338. Springer-Verlag,
	Berlin, 2009.
	
\end{thebibliography}

	
	\vspace{0.8cm}
	\noindent Aris DANIILIDIS
	
	\medskip
	
	\noindent Institut f\"{u}r Stochastik und Wirtschaftsmathematik, VADOR E105-04
	\newline TU Wien, Wiedner Hauptstra{\ss }e 8, A-1040 Wien\medskip
	\newline\noindent E-mail: \texttt{aris.daniilidis@tuwien.ac.at}
	\newline\noindent\texttt{https://www.arisdaniilidis.at/}
	
	\medskip
	
	\noindent Research supported by the Austrian Science Fund (FWF grant \textsc{10.55776/P36344}).\newline\vspace{0.4cm}
	
	\noindent David SALAS
	
	\medskip
	
	\noindent Instituto de Ciencias de la Ingenieria, Universidad de
	O'Higgins\newline Av. Libertador Bernardo O'Higgins 611, Rancagua, Chile
	\smallskip
	
	\noindent E-mail: \texttt{david.salas@uoh.cl} \newline\noindent
	\texttt{http://davidsalasvidela.cl} \medskip
	
	\noindent Research supported by the grants: \smallskip\newline\textsc{CMM
		FB210005 BASAL} funds for centers of excellence (\textsc{ANID}-Chile)\newline%
	\textsc{FONDECYT 11220586} (Chile)
	\end{document}